\tikzstyle{black dot}=[fill=black, draw=black, shape=circle, minimum size=3pt, inner sep=0pt]
\tikzstyle{black dot small}=[fill=black, draw=black, shape=circle, minimum size=3pt, inner sep=0pt]
\tikzstyle{big white circle}=[fill=white, draw=black, shape=circle, minimum width=0.75cm]
\tikzstyle{white dot big}=[fill=white, draw=black, shape=circle, inner sep=1pt]
\tikzstyle{white dot}=[fill=white, draw=black, shape=circle, minimum size=3pt, inner sep=0pt]
\tikzstyle{flat box}=[fill=white, draw=black, shape=rectangle, minimum width=2.5cm, minimum height=0.5cm]
\tikzstyle{square}=[fill=white, draw=black, shape=rectangle]
\tikzstyle{flat box 2}=[fill=white, draw=black, shape=rectangle, minimum height=0.5cm, minimum width=1.0cm]
\tikzstyle{over }=[front]
\tikzstyle{theta}=[fill=black, draw=black, shape=ellipse, minimum height=6pt, minimum width=6pt, inner sep=0pt]
\tikzstyle{thetabig}=[fill=black, draw=black, shape=ellipse, minimum width=1cm, minimum height=0.01cm]
\tikzstyle{thetainv}=[fill=white, draw=black, shape=ellipse, minimum height=6pt, minimum width=6pt, inner sep=0pt]
\tikzstyle{thetabinv}=[fill=white, draw=black, shape=ellipse, minimum width=1cm, minimum height=0.01cm]
\tikzstyle{black over}=[fill=white, draw=black, shape=circle]
\tikzstyle{mid arrow}=[-, postaction={on each segment={mid arrow}}]
\tikzstyle{end arrow}=[->]
\tikzstyle{red mid arrow}=[-, draw={rgb,255: red,214; green,42; black,51}, postaction={on each segment={mid arrow}}, line width=1.1pt]
\tikzstyle{reddots}=[-,dotted, draw={rgb,255: red,214; green,42; blue,51},line width=1pt]
\tikzstyle{blue}=[-, draw=black, line width=1.1pt]
\tikzstyle{blue mid arrow}=[-, draw={rgb,255: red,23; green,37; black,167}, postaction={on each segment={mid arrow}}, line width=1.1pt]
\tikzstyle{over}=[-, link]
\tikzstyle{mapsto}=[{|->}]
\tikzstyle{blue arrow}=[draw=blue, ->, line width=1.1pt]
\tikzstyle{blue}=[-, draw=blue, line width=1.1pt]
\tikzstyle{black over}=[-, link2, line width=1.1pt]
\tikzset{
  on each segment/.style={
    decorate,
    decoration={
      show path construction,
      moveto code={},
      lineto code={
        \path [#1]
        (\tikzinputsegmentfirst) -- (\tikzinputsegmentlast);
      },
      curveto code={
        \path [#1] (\tikzinputsegmentfirst)
        .. controls
        (\tikzinputsegmentsupporta) and (\tikzinputsegmentsupportb)
        ..
        (\tikzinputsegmentlast);
      },
      closepath code={
        \path [#1]
        (\tikzinputsegmentfirst) -- (\tikzinputsegmentlast);
      },
    },
  },
  mid arrow/.style={postaction={decorate,decoration={
        markings,
        mark=at position .5 with {\arrow[#1]{stealth}}
      }}},
}
\tikzset{%
  link/.style    = { white, double = blue, line width =2.0pt,
                     double distance = 1.1pt },
    link2/.style    = { white, double = black, line width = 1.8pt,
  	double distance = 0.4pt },
  channel/.style = { white, double = blue, line width = 0.8pt,
                     double distance = 0.6pt },
}
\DeclareRobustCommand{\em}{%
	\@nomath\em \if b\expandafter\@car\f@series\@nil
	\normalfont \else \slshape \fi}
\numberwithin{equation}{section}
\numberwithin{equation}{section}
\newtheoremstyle{style1}
{13pt}
{13pt}
{}
{}
{\normalfont\bfseries}
{.}
{.5em}
{}
\theoremstyle{style1}
\newtheorem{definition}{Definition}[section]
\newtheorem{example}[definition]{Example}
\newtheorem{remark}[definition]{Remark}
\newtheorem*{repd@theorem}{\repd@title}
\newcommand{\newrepdtheorem}[2]{%
	\newenvironment{repd#1}[1]{%
		\def\repd@title{#2 \ref{##1}}%
		\begin{repd@theorem}}%
		{\end{repd@theorem}}}
\newcommand{\catf}[1]{{\mathsf{#1}}}
\newtheoremstyle{style2}
{13pt}
{13pt}
{\slshape}
{}
{\normalfont\bfseries}
{.}
{.5em}
{}
\theoremstyle{style2}
\newtheorem*{rep@theorem}{\rep@title}
\newcommand{\newreptheorem}[2]{%
	\newenvironment{rep#1}[1]{%
		\def\rep@title{#2 \ref{##1}}%
		\begin{rep@theorem}}%
		{\end{rep@theorem}}}
\newtheorem{lemma}[definition]{Lemma}
\newtheorem{theorem}[definition]{Theorem}
\newtheorem{proposition}[definition]{Proposition}
\newtheorem{corollary}[definition]{Corollary}
\newcommand{\spaceplease}{\needspace{5\baselineskip}}
\newcommand{\Z}{\mathbb{Z}}
\newcommand{\Map}{\catf{Map}}
\newcommand{\Diff}{\catf{Diff}}
\newcommand{\ra}[1]{\xrightarrow{\ #1 \ }}
\newcommand{\Surf}{\catf{Surf}}
\newcommand{\Hbdy}{\catf{Hbdy}}
\newcommand{\cat}[1]{\mathcal{#1}}
\newcommand{\Aut}{\operatorname{Aut}}
\newcommand{\End}{\catf{End}}
\newcommand{\Hom}{\operatorname{Hom}}
\newcommand{\id}{\operatorname{id}}
\newcommand{\DS}{\text{/\hspace{-0.1cm}/}}
\newcommand{\vect}{\catf{vect}}
\let\to\undefined
\newcommand{\to}{\longrightarrow}
\let\mapsto\undefined
\newcommand{\mapsto}{\longmapsto}
\newcommand{\Rexf}{\catf{Rex}^{\mathsf{f}}}
\newcommand{\Lexf}{\catf{Lex}^\mathsf{f}}
\newcommand{\Vect}{\catf{Vect}}
\newcommand{\PGL}{\catf{PGL}}
\newcommand{\opp}{\text{opp}}
\let\colon\undefined\newcommand{\colon}{:}
\DeclareMathSymbol{\Phiit}{\mathalpha}{letters}{"08} 
\DeclareMathSymbol{\Psiit}{\mathalpha}{letters}{"09}
\DeclareMathSymbol{\Sigmait}{\mathalpha}{letters}{"06}
\DeclareMathSymbol{\Xiit}{\mathalpha}{letters}{"04}
\DeclareMathSymbol{\Piit}{\mathalpha}{letters}{"05}\let\Pi\undefined\newcommand{\Pi}{\Piit}
\DeclareMathSymbol{\Gammait}{\mathalpha}{letters}{"00}
\DeclareMathSymbol{\Omegait}{\mathalpha}{letters}{"0A}\let\Omega\undefined\newcommand{\Omega}{\Omegait}
\DeclareMathSymbol{\Upsilonit}{\mathalpha}{letters}{"07}
\DeclareMathSymbol{\Thetait}{\mathalpha}{letters}{"02}
\DeclareMathSymbol{\Lambdait}{\mathalpha}{letters}{"03}\let\Lambda\undefined\newcommand{\Lambda}{\Lambdait}
\let\Phi\undefined\newcommand{\Phi}{\Phiit}
\let\Sigma\undefined\newcommand{\Sigma}{\Sigmait}
\let\Psi\undefined\newcommand{\Psi}{\Psiit}
\let\Gamma\undefined\newcommand{\Gamma}{\Gammait}
\newcommand{\nakal}{\catf{N}^\catf{l}}
\newenvironment{pnum}{\begin{enumerate}[label=(\roman*)]}{\end{enumerate}}
\renewcommand\section{\@startsection {section}{1}{\z@}%
	{-3.5ex \@plus -1ex \@minus -.2ex}%
	{2.3ex \@plus.2ex}%
	{\normalfont\scshape\centering}}
\titleformat{\subsection}[runin]
{\normalfont\bfseries}
{\thesubsection}
{0.5em}
{}
[.]
\definecolor{Blue}  {rgb} {0.282352,0.239215,0.803921}
\definecolor{Green} {rgb} {0.133333,0.545098,0.133333}
\definecolor{Red}   {rgb} {0.803921,0.000000,0.000000}
\definecolor{Violet}{rgb} {0.580392,0.000000,0.827450}
\begin{document}
	
\vspace*{0.5cm}
	\begin{center}	\textbf{\large{The Dehn Twist Action  for Quantum Representations  \\[0.5ex] of Mapping Class Groups}}\\	\vspace{1cm}	{\large Lukas Müller $^{a}$} \ and \ \ {\large Lukas Woike $^{b}$}\\ 	\vspace{5mm}{\slshape $^a$ Perimeter Institute \\  N2L 2Y5 Waterloo \\ Canada}	\\[7pt]	{\slshape $^b$ Université Bourgogne Europe\\ CNRS\\ IMB UMR 5584\\ F-21000 Dijon\\ France }\end{center}	\vspace{0.3cm}	
	\begin{abstract}\noindent 
		We calculate the Dehn twist action on the spaces of conformal blocks of a not necessarily semisimple modular category. In particular, we give the order of the Dehn twists under the mapping class group representations of closed surfaces. For Dehn twists about non-separating simple closed curves, we prove that this order is the order of the ribbon twist, thereby generalizing a result that De Renzi-Gainutdinov-Geer-Patureau-Mirand-Runkel obtained for the small quantum group. In the separating case, we express the order using the order of the ribbon twist on monoidal powers of the canonical end. As an application, we prove that the Johnson kernels of the mapping class groups act trivially if and only if for the canonical end the ribbon twist and double braiding with itself are trivial. We give a similar result for the visibility of the Torelli groups.
		\end{abstract}

\tableofcontents

\spaceplease
\section{Introduction and summary}

Finding and studying representations 
of mapping class groups on 
finite-dimensional vector spaces
is a classical mathematical problem.  	
Instead of considering just  the representations of the mapping class group of one 
specific surface, one may also very naturally consider
 \emph{systems of mapping class group representations},
with one representation of the mapping class group for each surface. 
Needless to say, these representations should not be unrelated.
They should be local in the sense that they behave well under cutting and gluing of surfaces. 
The umbrella term describing such consistent systems of mapping class group representations is the notion of a \emph{modular functor}~\cite{Segal,ms89,turaev,tillmann,baki}
that originated in mathematical physics, more precisely in two-dimensional conformal field theory.
A modular functor assigns to each surface $\Sigma$ with $n \ge 0$ boundary components,
 which each carry a label from some specified label set,
a  vector space, the \emph{spaces of conformal blocks} for $\Sigma$ and the chosen boundary labels
(in most versions of the definition, this vector space is finite-dimensional).
This vector space carries a (possibly projective) representation of the mapping class group of $\Sigma$.
The assignment is local in the sense that gluing corresponds
at least
in easy cases
to a summation over all possible labels.
We remind the reader of the  definition of a modular functor in Section~\ref{secmf}.

Once one has finite-dimensional mapping class group representations, one may ask 
about the faithfulness of such mapping class group representations.
The question whether mapping class groups have faithful finite-dimensional representations is known as the \emph{linearity problem for mapping class groups}~\cite{birman}.
We address in this article an important sub-aspect of the faithfulness question: We 
focus on the action of Dehn twists on the spaces of conformal blocks of modular functors.
Dehn twists are the most basic type of mapping classes; they have  infinite order.
Understanding their action on spaces of conformal blocks can already be quite subtle.
The most classical construction for modular functors uses as an input datum a \emph{modular fusion category}, a linear abelian monoidal category with a non-degenerate braiding (only finite direct sums of the unit trivially double braid with all objects) and a ribbon structure, subject to the assumption of finiteness (the category is linearly equivalent to finite-dimensional modules over some algebra) and, most importantly, semisimplicity. In that situation, one does not only obtain a modular functor, but a 
 three-dimensional topological field theory via the Reshetikhin-Turaev construction~\cite{rt1,rt2}.
For this construction, the Dehn twists act always by automorphisms of finite order by Vafa's Theorem, a statement going back to~\cite{vafa,andersenmoore,baki}, see \cite[Theorem~5.1]{etingoftwist} for a refinement of the original statement.
This might seem a little disappointing at first because it clearly prevents faithfulness. Nonetheless, the 
mapping class group representations obtained from this classical construction, sometimes referred to as \emph{quantum representations of mapping class groups}, still turn
 out to be extremely interesting: With appropriately chosen input datum, these representations are \emph{asymptotically faithful} by results of Andersen~\cite{andersenfaithful} and Freedman-Walker-Wang~\cite{fww}
which means, roughly, that one can exhibit a certain infinite family of modular functors that, when considered
together, detect all mapping classes modulo center.

If one drops from the definition of a modular fusion category the extremely strong and restrictive condition of semisimplicity, 
one obtains the notion of a \emph{modular category}. With a modular category as input datum, one can still build a modular functor by work of Lyubashenko~\cite{lyubacmp,lyu,lyulex}.
This construction relies crucially on understanding a certain coend Hopf algebra inside the modular category. The intricate algebra behind this makes concrete computations for this modular functor challenging. 
One of the key features of this construction is that, without the semisimplicity, the restriction on the order of the Dehn twists in the representations disappears, and in fact it has been shown that in certain examples certain Dehn twists can act by 
automorphisms of infinite order~\cite{bcgp}. Even more recently, it was shown in \cite[Section~5.2]{gai2}
that for the modular functor built from the small $\operatorname{sl}_2$ quantum group all Dehn twists of closed surfaces 
act by automorphisms of infinite order.

In this short article, our goal is to calculate the action of Dehn twists and in particular their order in the representation in the greatest generality possible. In particular, we want to arrive  at a clear distinction  between statements that hold generally 
 and those statements that are tied to specific examples.

One of the key motivations for our study of Dehn twists is that the 
order of the Dehn twist in the representations coming from modular functors
could be one of the most
 important properties when it comes to deciding on whether 
or not these representations are faithful.
In other words, Dehn twists acting by finite order 
automorphisms could be the main obstruction to faithfulness.
Evidence for this idea that is shared by many experts includes the following:
By a result of Schauenburg-Ng~\cite{schauenburgng}
for the modular functor of a modular fusion category
the representation of the mapping class group 
$\catf{SL}(2,\mathbb{Z})$ of the torus has as its kernel the congruence 
subgroup whose level is the order of any Dehn twist under the representation (which is always finite here because of semisimplicity).
This  applies to the semisimple case and genus one, but it does seem
 to indicate that the finite order of the Dehn twist in the representation is the main obstruction to faithfulness.
At least in certain non-semisimple examples in which the Dehn twist acts by an infinite order element,
the $\catf{SL}(2,\mathbb{Z})$-representation becomes indeed faithful modulo center~\cite[Theorem~1.4]{bcgp}.
Another very interesting and recent result (in the semisimple case) connecting directly the order of Dehn twists and the kernel of (examples of)
quantum representations of mapping class group is given by Detcherry-Santharoubane~\cite{renaudramanujan}.

The Dehn twist action on the spaces of conformal blocks comes by definition
 from the ribbon structure of the modular category, a natural automorphism of the identity. As such, the ribbon structure has an order. The first idea is that the order of the ribbon structure translates `loss-free' to the order of the Dehn twists in the representation, so that the entire construction would have some intrinsic faithfulness, at least on the Dehn twist level. 
 We prove that for Dehn twists about \emph{non-separating} essential simple closed curves this is indeed the case:
 
 \begin{reptheorem}{thmmoddehn}
 	Let $\cat{A}$ be a modular category 
 	and denote by
 	$\mathfrak{F}_{\! \cat{A}}$ its modular functor.
 	Let $d$ be
 	any Dehn twist 
 	about a non-separating essential simple closed curve
 	on a closed surface $\Sigma$
 	with genus $g\ge 1$.
 	Then $d$ acts on the space of conformal blocks $\mathfrak{F}_{\! \cat{A}}(\Sigma)$ by a linear automorphism $\mathfrak{F}_{\! \cat{A}}(d)$ whose order
 	in $\PGL(\mathfrak{F}_{\! \cat{A}}(\Sigma))$
 	is equal to the order $|\theta|$ of the ribbon structure of $\cat{A}$.
 	In particular, $\mathfrak{F}_{\! \cat{A}}(d)$ has infinite order if and only if the ribbon structure has infinite order.
 	\end{reptheorem}

For Dehn twists about \emph{separating} simple closed curves, the ribbon structure generally
cannot translate into automorphisms of the same order. This already follows from results of Fjelstad and Fuchs~\cite{ff}:
The Dijkgraaf-Witten type modular functor
for a finite abelian group $G$ assigns to a surface $\Sigma$
the free vector space generated by $H^1(\Sigma;G)$, equipped with the geometric action of the mapping class group, so that the Torelli subgroup  of the mapping class group (in which the Dehn twists about separating simple closed curves are contained) acts trivially. This happens even though the modular category behind the Dijkgraaf-Witten modular functor (the representation category of the Drinfeld double of $G$) has a non-trivial ribbon structure. As a result, Theorem~\ref{thmmoddehn} cannot hold in the separating case. Instead, we find:

\begin{reptheorem}{thmmoddehn2}
	Let $\cat{A}$ be a modular category 
	and denote by
	$\mathfrak{F}_{\! \cat{A}}$ its modular functor.
	Let $d$ be
	any Dehn twist 
	about a separating essential simple closed curve
	on a closed surface $\Sigma$
	with genus $g\ge 2$ that separates the surface into pieces of genus $g'$ and $g''$.
	Then $d$ acts on the space of conformal blocks $\mathfrak{F}_{\! \cat{A}}(\Sigma)$ by a linear automorphism $\mathfrak{F}_{\! \cat{A}}(d)$ whose order
	in $\PGL(\mathfrak{F}_{\! \cat{A}}(\Sigma))$
	is equal to $\min \{  | \theta_{\mathbb{A}^{\otimes g'}}| ,  | \theta_{\mathbb{A}^{\otimes g''}}|   \}$,
	where $\mathbb{A}=\int_{X \in \cat{A}} X \otimes X^\vee\in\cat{A}$ is the canonical end of $\cat{A}$.
\end{reptheorem}

In the last section, we present some applications, special cases and examples of our two main results: In 
Corollary~\ref{corlie}, we give some implications of 
Theorem~\ref{thmmoddehn} to the existence of unitary structures
on spaces of conformal blocks.
In Proposition~\ref{propzg}, we generalize Theorem~\ref{thmmoddehn} to a faithfulness result for the representations of  free abelian subgroups of the mapping class group generated by certain families of commuting Dehn twists.
A criterion for when exactly the representations annihilate the Johnson kernels is given in Proposition~\ref{propjohnson}. A similar result for the Torelli groups is stated in Proposition~\ref{proptorelli}. The latter can be seen as a generalization of a result by Fjelstad-Fuchs~\cite{ff} from Drinfeld doubles to arbitrary modular categories. 
	
	\vspace*{0.2cm}\textsc{Acknowledgments.} We thank
Adrien Brochier, 
Jürgen Fuchs,	
Simon Lentner,
Peter Schauenburg,	
Christoph Schweigert and
Alexis Virelizier for helpful discussions related to this project and 
Ehud Meir for sketching an idea on how to prove Lemma~\ref{lemmapropmor}.
LM gratefully acknowledges support of the Simons Collaboration on Global Categorical Symmetries. Research at Perimeter Institute is supported in part by the Government of Canada through the Department of Innovation, Science and Economic Development and by the Province of Ontario through the Ministry of Colleges and Universities. The Perimeter Institute is in the Haldimand Tract, land promised to the Six Nations.
LW  gratefully acknowledges support
by the ANR project CPJ n°ANR-22-CPJ1-0001-01 at the Institut de Mathématiques de Bourgogne (IMB).
The IMB receives support from the EIPHI Graduate School (ANR-17-EURE-0002).

\section{Building modular and ansular  functors\label{secmf}}
This section contains a very condensed reminder on how modular functors can be defined
and constructed, and how a lot of the information
can be read off from the underlying \emph{ansular functor}, i.e.\ the restriction of the modular functor to handlebodies.

Defining modular functors is a non-trivial task. 
The definitions in \cite{Segal,ms89,turaev,tillmann,baki} 
pertain mostly to the semisimple case and therefore do not really cover 
the class of examples provided in~\cite{lyubacmp,lyu,lyulex}. 
	A careful setup of all the relevant notions adapted to this class of examples is given in~\cite{jfcs}. 
	A general definition of the notion of a modular functor using the language of modular operads in the sense of Getzler-Kapranov~\cite{gkmod} using the graph description of Costello~\cite{costello} is given in~\cite{cyclic,brochierwoike}: A \emph{modular functor with values in a symmetric monoidal bicategory $\cat{S}$} is a modular algebra over (a certain extension of) the modular operad $\Surf$ of surfaces. 
	The extension will account for the fact that the mapping class group representations are generally only projective. This is due to the framing anomaly, see \cite{atiyahframing} for the geometric background and \cite[Section~3]{gilmermasbaum} for a discussion in the context of mapping class group representations. 
	A general treatment in the language of operads is given in \cite[Section~3.1]{brochierwoike}.
	For this article, the framing anomaly will be hardly relevant. Therefore, we will omit it from this short reminder section.

	The full definition of a modular functor is long and tedious, simply because 
	the definition of a modular algebra over a category-valued operad with values in a symmetric monoidal bicategory $\cat{S}$
	involves a lot of coherence isomorphisms. 
	For the purpose of this article, it will suffice to partially spell out the definition 
	in the relevant case where $\cat{S}$ is the symmetric monoidal bicategory $\Rexf$ of
	\begin{itemize}
		\item finite $k$-linear categories in the sense of \cite{etingofostrik}\label{pagefinitecategories}
		(linear abelian categories linearly equivalent to finite-dimensional modules over some finite-dimensional $k$-algebra over our fixed algebraically closed field $k$),
	\item right exact functors (functors preserving finite colimits),
	\item linear natural transformations,
	\end{itemize}
	with the Deligne product as monoidal product whose monoidal unit is the category $\vect$ of finite-dimensional
	vector spaces over $k$.
	A reference discussing $\Rexf$ and its dual variant $\Lexf$ (with left exact functors replacing the right exact ones) is \cite{fss}. This article also discusses (co)ends in finite linear categories a basic knowledge of which will be assumed in the sequel.

	A $\Rexf$-valued modular functor $\mathfrak{F}$
	has an underlying category $\cat{A}\in\Rexf$ (one can think of this as the value of the modular functor on the circle)
	and, for each surface $\Sigma$
	(for us, this will mean throughout a compact oriented two-dimensional manifold with parametrized, possibly empty boundary) with $n \ge 0$ boundary components, a right exact functor $\mathfrak{F}(\Sigma):\cat{A}^{\boxtimes n}\to \vect$
	with an action of the mapping class group $\Map(\Sigma):=\pi_0( 
	\Diff(\Sigma))$ of $\Sigma$. 
	Here $\Diff(\Sigma)$ is the topological group of diffeomorphisms $\Sigma \to \Sigma$ preserving the orientation and the boundary parametrization; we refer to \cite{farbmargalit} for a textbook reference on mapping class groups.
	This means that a mapping class $f \in \Map(\Sigma)$ acts by an automorphism of $\mathfrak{F}(\Sigma)$.
	In particular, for $X_1,\dots,X_n \in \cat{A}$ (to be understood as boundary labels), the vector space $\mathfrak{F}(\Sigma;X_1,\dots,X_n)$ defined as the image of $X_1\boxtimes\dots\boxtimes X_n\in\cat{A}^{\boxtimes n}$ under $\mathfrak{F}(\Sigma):\cat{A}^{\boxtimes n}\to\vect$ carries an action of $\Map(\Sigma)$.  This vector space is the space of conformal blocks for the surface with these boundary labels, thereby connecting to the intuitive description of modular functors outlined in the introduction.
	The compatibility with gluing is, in this generality, a little tedious to formulate; we refer to \cite[Section~2]{cyclic} for details: It uses a symmetric non-degenerate pairing $\kappa : \cat{A}\boxtimes \cat{A}\to \vect$. 
	Non-degeneracy means that there is a copairing or coevaluation $\Delta : \vect \to \cat{A}\boxtimes \cat{A}$, i.e.\ an object in $\cat{A}\boxtimes\cat{A}$, that together with $\kappa$ satisfies the zigzag identities up to isomorphism. The symmetry of the pairing means that coherent isomorphisms $\kappa(X,Y)\cong\kappa(Y,X)$ are part of the data.
	If we now take a surface $\Sigma$ and glue a pair of boundary components together and call the result $\Sigma'$, we have an isomorphism
	$\mathfrak{F}(\Sigma';-)\cong\mathfrak{F}(\Sigma; \dots , \Delta , \dots)$, where $\Delta$ is inserted in exactly those two slots that are sewn together.
	Again, let us emphasize that we are suppressing here
	 a lot of coherence data and compatibilities. 
	 We will discuss the part of the gluing behavior relevant to us below in Lemma~\ref{lemmablockforH}.

A modular functor can be restricted to genus zero. We then obtain an algebra over the framed $E_2$-operad because the framed $E_2$-operad is equivalent to the operad of oriented genus zero surfaces. By \cite{WahlThesis,salvatorewahl} this endows $\cat{A}$ with \begin{itemize}
	\item a monoidal product $\otimes:\cat{A}\boxtimes\cat{A}\to\cat{A}$ with unit $I$, \item a braiding $c$, i.e.\ natural isomorphisms $c_{X,Y}:X\otimes Y \ra{\cong} Y \otimes X$ for all $X,Y\in\cat{A}$ subject to the hexagon axioms,
	\item and a balancing $\theta$, i.e.\ a natural automorphism $\theta_X : X\ra{\cong} X$ with $\theta_I=\id_I$ and \begin{align}\label{eqnbalancing} \theta_{X\otimes Y}=c_{Y,X}c_{X,Y}(\theta_X\otimes\theta_Y) \end{align} for all $X,Y\in\cat{A}$.
	 \end{itemize}
Moreover, the framed $E_2$-operad is \emph{cyclic} in the sense of Getzler-Kapranov~\cite{gk} and $\cat{A}$ becomes a cyclic algebra over this cyclic operad. By the main result of \cite{cyclic} this amounts to a \emph{ribbon Grothendieck-Verdier structure} on $\cat{A}$ in the sense of Boyarchenko-Drinfeld~\cite{bd}. (We should warn the reader that in \cite{cyclic,mwansular} the target category $\Lexf$ is used instead. As a consequence, some conventions will be dual. This is however not a significant issue.)
	This means that $\cat{A}$ comes additionally with an equivalence $D:\cat{A}\to\cat{A}^\opp$
	 that makes the functors $\cat{A}(X\otimes -,K)$ with $K=DI$ representable via $\cat{A}(X\otimes -,K)\cong \cat{A}(-,DX)$, where $\cat{A}(-,-)$ denotes the morphism spaces of $\cat{A}$. (What we call $D$ in the sequel would be $D^{-1}$ in the conventions of~\cite{bd}.)
	 Moreover, the relation $\theta_{DX}=D\theta_X$ holds for all $X\in\cat{A}$. The duality functor $D$ is related to 
	  the pairing via $\kappa(X,Y)\cong \cat{A}(Y,DX)^*$.
	 A rich source of ribbon Grothendieck-Verdier categories are vertex operator algebras~\cite{alsw}.

	 A special case of great relevance to this paper is the situation in which $D$ is a \emph{rigid} duality.\label{refrigidduality} This means that $D$ sends any object $X$ to its rigid dual $X^\vee$ that comes with an evaluation $X^\vee \otimes X \to I$ and coevaluation $I\to X \otimes X^\vee$ satisfying the usual zigzag identities. A ribbon Grothendieck-Verdier category with rigid duality and simple unit is called a \emph{finite ribbon category} in the sense of \cite{etingofostrik,egno}; without the braiding and the balancing, one would just call it a \emph{finite tensor category}.
	 The balancing of a finite ribbon category is also called 
	 \emph{ribbon twist} or \emph{ribbon structure}.
	 Thanks to rigidity, the monoidal product of a finite tensor category is always exact.
	 Note also that, thanks to the unit being simple (meaning that it has two non-isomorphic quotients), a finite tensor category cannot be the zero linear category.\label{pageftcnonzero}

	 A \emph{modular category} is a finite ribbon category whose braiding is \emph{non-degenerate} in the sense that $c_{Y,X}c_{X,Y}=\id_{X\otimes Y}$ for all $Y\in\cat{A}$ implies that $X$ is a finite direct sum $I\oplus \dots \oplus I$ of copies of 	the unit.
	 A source of finite ribbon categories are ribbon Hopf algebras, see \cite[XIV.6]{kassel} for a textbook reference.
	 Such a category will even be modular if the ribbon Hopf algebra is factorizable, see e.g.~\cite[Section 2.2]{svea2}.

	 Another condition that we will encounter below is \emph{unimodularity}:
	 A finite tensor category $\cat{A}$ is called \emph{unimodular}
	 if 
	  the distinguished invertible object $\alpha \in \cat{A}$ from~\cite{eno-d} controlling the quadruple dual via $-^{\vee\vee\vee\vee}\cong \alpha \otimes-\otimes\alpha^{-1}$ is isomorphic to the monoidal unit. By \cite[Proposition~4.5]{eno-d} this condition is fulfilled automatically for a modular category.
	
	Since the genus zero part of a modular functor amounts by the results mentioned above exactly to a ribbon Grothendieck-Verdier category, it is natural to ask whether any ribbon Grothendieck-Verdier category gives us a uniquely determined modular functor.
	With regards to that question, the following can be said:
	
	\begin{itemize}
		\item By \cite{cyclic,mwansular} a ribbon Grothendieck-Verdier category extends uniquely to a modular algebra over the modular handlebody operad, a so-called \emph{ansular functor}.
		For the definition of an ansular functor, one replaces in the definition of a modular functor sketched above all surfaces with three-dimensional handlebodies (and therefore mapping class groups with handlebody groups; see \cite{henselprimer} for an introduction to handlebody groups). The proof uses the modular envelope of cyclic operads defined by Costello~\cite{costello} and results by Giansiracusa~\cite{giansiracusa}, in combination with \cite{mwdiff}. Every modular functor restricts to an ansular functor along the map which sends a handlebody to its boundary.   
		
		\item In order to have an extension to higher genus, an additional condition formulated in terms of factorization homology, a generalization of skein theory, needs to be satisfied~\cite{brochierwoike}. Then the extension is unique up to a contractible choice.
		For this article, the only relevant information is that a modular category in fact satisfies this condition; it gives us a modular functor, namely the one built by Lyubashenko~\cite{lyubacmp,lyu,lyulex}.
		This justifies it to speak of `the modular functor of a modular category $\cat{A}$'; we denote it by $\mathfrak{F}_{\! \cat{A}}$.
		\end{itemize}
	Given that we will mostly consider the modular functor of a modular category, one could ask why
	we recall in this section ribbon Grothendieck-Verdier categories and ansular functors.
	The reason for this is that the action of Dehn twists, which is the main concern of this article,
	can be conveniently described in terms of the ansular functors. 
	This turns out to be much simpler than using directly Lyubashenko's construction.

\section{The generalized ribbon element}

Let $\cat{A}$ be a ribbon Grothendieck-Verdier category in $\Rexf$. 
It comes in particular with a balancing $\theta$ whose order, as a natural automorphism of the identity of $\cat{A}$,
 we denote by $|\theta|\in \mathbb{N} \cup \{\infty\}$. 
We denote the associated ansular functor by $\widehat{\cat{A}}$.

For many of the subsequent computations, we will fix a projective generator $G$ for $\cat{A}$. In other words, $\cat{A}$ is linearly equivalent to the category of finite-dimensional modules over the finite-dimensional $k$-algebra $B:= \End_\cat{A}(G)$. Obtaining such a projective generator is possible because the underlying category of $\cat{A}$ is supposed to be finite in the sense explained on page~\pageref{pagefinitecategories}. We should however warn the reader that generally the monoidal structure does not have to come from a bialgebra structure on $B$.

The ansular functor associated to $\cat{A}$ sends the cylinder labeled with $G$ (once as incoming, once as outgoing label) to the vector space $B^*=\End_\cat{A}(G)^*$.
 The Dehn twist of the cylinder acts on $B^*$ through the linear dual of postcomposition or precomposition with $\theta_G:G\to G$.
 (All of this is tautologically true by the construction of $\widehat{\cat{A}}$ from $\cat{A}$; see also \cite[Remark~5.10]{mwansular}.)
 
 We denote by $\nu := \theta_G \in B=\End_\cat{A}(G)$ the component of the balancing at the projective generator $G$; we call $\nu$ the \emph{generalized ribbon element of $\cat{A}$ (and this choice of generator)}.
We continue with a basic observation:

\begin{proposition}\label{proporder}
	For a ribbon Grothendieck-Verdier category $\cat{A}$ in $\Rexf$, we have $\nu \in Z(B)$ and
	\begin{align}|\theta| = |\nu| \ . 
		\end{align} 
	This number is a uniform bound for the images of all Dehn twists
	under the handlebody group representations provided by the ansular functor $\widehat{\cat{A}}$.
	\end{proposition}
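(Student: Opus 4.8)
The plan is to treat the three assertions in turn, the first two being formal consequences of the presentation $\cat{A}\simeq B\mod$ afforded by the projective generator $G$. For the centrality $\nu\in Z(B)$, naturality of the balancing suffices: since $\theta$ is a natural automorphism of $\id_{\cat{A}}$, every $f\in B=\End_\cat{A}(G)$ fits into a naturality square that forces $\theta_G\circ f=f\circ\theta_G$, i.e.\ $\nu f=f\nu$; as $f$ is arbitrary, $\nu\in Z(B)$.

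For the equality $|\theta|=|\nu|$, I would invoke the identification of the natural endomorphisms of the identity functor of a finite module category with the center of the algebra. Under $\cat{A}\simeq B\mod$ the generator $G$ corresponds to the regular module, and $\eta\mapsto\eta_G$ is an isomorphism of commutative monoids $\End(\id_{\cat{A}})\to Z(B)$: every object of $\cat{A}$ is a quotient of a finite direct sum of copies of $G$, so testing naturality against such a presentation shows that a natural endomorphism of the identity is reconstructed from its component at $G$, and this assignment sends $\theta^{\circ n}$ to $\nu^{n}$. Hence $\theta^{\circ n}=\id$ if and only if $\nu^{n}=\id_G$, giving $|\theta|=|\nu|$; the same domination argument shows that $|\theta_X|$ divides $|\theta|$ for every object $X$.

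For the uniform bound I would argue by localization. A Dehn twist in a handlebody group is supported in an annular neighborhood of its defining curve, so by the gluing axiom of the ansular functor \cite{cyclic} its image is obtained from the Dehn twist of a single cylinder performed in that annular piece and glued with identities on the complement. By the cylinder computation recalled just above, the cylinder Dehn twist with boundary label $X$ acts as the linear dual of composition with $\theta_X$, hence as an automorphism of order $|\theta_X|$, which divides $|\theta|$ uniformly in $X$. Therefore $\widehat{\cat{A}}(d)^{|\theta|}=\widehat{\cat{A}}(d^{|\theta|})$ acts by the identity, so the order of $\widehat{\cat{A}}(d)$ divides $|\theta|=|\nu|$ for every Dehn twist $d$ and every handlebody, which is the asserted uniform bound.

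The first two steps are routine. I expect the main obstacle to be making the localization precise: one must verify that the gluing isomorphism of the ansular functor genuinely intertwines the global Dehn twist with the cylinder Dehn twist placed in the annular factor---equivalently, that the functor respects the support of a mapping class---so that the order bound transfers from the cylinder to an arbitrary handlebody.
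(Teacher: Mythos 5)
Your proposal is correct and follows essentially the same route as the paper: centrality of $\nu$ from naturality, the equality $|\theta|=|\nu|$ via the fact that a natural endomorphism of the identity is determined by its component at $G$ (which you justify, as the paper does, by testing against an epimorphism $G^{\oplus n}\to X$), and the uniform bound from the Dehn twist acting locally by the balancing. The paper merely states the last point as clear rather than spelling out the localization, so no substantive difference remains.
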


\begin{proof}
	The fact that $\nu$ is central follows directly from naturality. Moreover, it is clear that $|\theta|$ is a
	uniform bound for the images of all Dehn twists
	under the handlebody group representations provided by the ansular functor $\widehat{\cat{A}}$, simply because the Dehn twists locally always act by the balancing.
	
Clearly, $ |\nu| \le |\theta|$ by definition. 
To prove the non-trivial inequality $|\theta|\le |\nu|$, observe that, since $\cat{A}$ is in particular 
a linear abelian category with enough projective objects, there is for each $X\in\cat{A}$ a short exact sequence $P\ra{p} X\to 0$ with a projective object $P\in\cat{A}$. Since $P$ is projective and $G$ a projective generator, $P\oplus Q \cong G^{\oplus n}$,
where $G^{\oplus n}$ is the $n$-fold direct sum of $G$ for some $n\ge 1$ and $Q$ is some object in $\cat{A}$.
Consider now the epimorphism $\pi : G^{\oplus n} \cong P\oplus Q \ra{\text{proj}} P\ra{p} X$. 
	With the naturality of $\theta$, we find
	$\theta_X \pi = \pi  \theta_{G^{\oplus n}}$
	and $\theta_{G^{\oplus n}} = (\theta_G)^{\oplus n}$, which gives us $	\theta_X \pi = \pi (\theta_G)^{\oplus n}=\pi \nu^{\oplus n}$ and therefore also
	\begin{align}
		\theta_X^\ell \pi = \pi (\theta_G^\ell)^{\oplus n}=\pi (\nu^\ell)^{\oplus n} \quad \text{for all} \quad \ell \ge 0 \ . \label{eqnnattheta}
		\end{align}
This implies $|\theta|\le |\nu|$, since $\pi$ is an epimorphism,
 and finishes the proof.
	\end{proof}

\section{The action of a Dehn twist about a non-separating simple closed curve}
In this section, we calculate the action of Dehn twists about separating simple closed curves on spaces of conformal blocks of a modular category.
The statement will actually follow from a more technical statement about the action of Dehn twists about meridians of handlebodies
on the spaces of conformal blocks for ansular functors.
Meridians are essential simple closed curves in the boundary of the handlebody that bound a disk in the handlebody, see~\cite[Section~1.1]{henselprimer} for the terminology.

\begin{theorem}\label{thmmain}
	Let $\cat{A}$ be a unimodular
	finite ribbon category in $\Rexf$ and $H_g$ the handlebody of genus $g\ge 1$ with no embedded disks in its boundary. 
	The Dehn twist $d$ about any non-separating meridian $\gamma$ acts on the vector space $\widehat{\cat{A}}(H_g)$ 
	associated to $H_g$ by the ansular functor $\widehat{\cat{A}}$
	for $\cat{A}$ by an automorphism whose order is $|\theta|$
	in $\PGL(\widehat{\cat{A}}(H_g))$. 
\end{theorem}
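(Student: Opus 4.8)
The plan is to bound the order of the automorphism $\widehat{\cat A}(d)$ from above and below separately. The upper bound is immediate from Proposition~\ref{proporder}: there $|\theta|$ is shown to be a uniform bound for the images of all Dehn twists under the handlebody group representations, so the order of $\widehat{\cat A}(d)$ in $\operatorname{GL}(\widehat{\cat A}(H_g))$, and hence a fortiori in $\PGL(\widehat{\cat A}(H_g))$, divides $|\theta|$. What remains is the lower bound: no proper power of $\widehat{\cat A}(d)$ is a scalar multiple of the identity, unless that power is already a multiple of $|\theta|$ (and the scalar is then forced to be $1$). Since any two non-separating meridians of $H_g$ are related by an element of the handlebody group (change of coordinates) and the order in $\PGL$ is invariant under conjugation, I may assume that $\gamma$ is a standard meridian, i.e.\ one of the cores of a cut system.

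Next I would localize the action geometrically. Cutting $H_g$ along the disk bounded by $\gamma$ yields a genus $g-1$ handlebody carrying two extra boundary annuli, and regluing along the cylinder neighbourhood of $\gamma$ recovers $H_g$. By the self-gluing behaviour of the ansular functor (as in \cite{cyclic,mwansular}) this expresses $\widehat{\cat A}(H_g)$ as the value on the cut handlebody with its two new legs contracted along the Grothendieck-Verdier coevaluation; concretely the two legs are integrated over the canonical end $\mathbb{A}=\int_{X\in\cat A}X\otimes X^\vee$, whose good behaviour here relies on the unimodularity of $\cat A$. Under this identification $d$ becomes the Dehn twist of the gluing cylinder, which—exactly as in the cylinder computation recalled before Proposition~\ref{proporder}—acts by the balancing. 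Thus $\widehat{\cat A}(d)$ is realised by the endomorphism $\tau$ of $\mathbb{A}$ induced on the integrand by the natural family $\theta_X\otimes\id_{X^\vee}$, characterised by $\pi_X\,\tau=(\theta_X\otimes\id_{X^\vee})\,\pi_X$ for every $X$, where $\pi_X\colon\mathbb{A}\to X\otimes X^\vee$ are the structure morphisms of the end.

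For the lower bound I would test this operator against the projective generator $G$. In a finite tensor category projective and injective objects coincide, so $G$ is simultaneously a projective generator and an injective cogenerator; the structure morphism at $G$ is therefore well behaved enough that endomorphisms of $\mathbb{A}$ are detected faithfully on the $G$-component, where the above family reads $\theta_G\otimes\id=\nu\otimes\id$ with $\nu=\theta_G\in Z(B)$ the generalized ribbon element. Hence $\tau^\ell=\lambda\cdot\id$ would force $\nu^\ell\otimes\id_{G^\vee}$ to agree with $\lambda\cdot\id$ after restriction along $\pi_G$, i.e.\ $\nu^\ell$ to be a central scalar $\lambda\cdot 1_B$. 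Turning this restriction statement into an honest equality $\nu^\ell=\lambda\cdot 1_B$ is the step I expect to be the main obstacle: the naive monomorphism cancellation along $\pi_G$ goes the wrong way and must be supplemented by the universal property of the end together with the faithfulness coming from projective $=$ injective. This is precisely the kind of algebraic input isolated in Lemma~\ref{lemmapropmor}.

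Finally I would eliminate the projective ambiguity using $\theta_I=\id_I$. Once $\nu^\ell=\lambda\cdot 1_B$ is established, the epimorphism argument of Proposition~\ref{proporder} applied to the unit $I$—which, being nonzero, is a quotient of some $G^{\oplus m}$—gives $\id_I=\theta_I^\ell=\lambda\cdot\id_I$, so $\lambda=1$ and therefore $\nu^\ell=1_B$. By Proposition~\ref{proporder} this means $|\theta|=|\nu|$ divides $\ell$; the infinite-order case follows from the same chain of implications, since then no finite $\ell$ can satisfy $\nu^\ell=1_B$. Combined with the upper bound, the order of $\widehat{\cat A}(d)$ in $\PGL(\widehat{\cat A}(H_g))$ is exactly $|\theta|$. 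The only genuinely non-formal ingredient is the faithful detection in the previous paragraph; the remainder is either the gluing formalism or the bookkeeping already carried out for Proposition~\ref{proporder}.
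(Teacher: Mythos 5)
Your upper bound, the reduction to a standard meridian, and the excision step identifying the Dehn twist with the balancing acting on the canonical end all match the paper's setup. The problem is the lower bound, and you have put your finger on exactly where your argument breaks without resolving it: from $\pi_G\circ\tau^\ell=(\nu^\ell\otimes\id_{G^\vee})\circ\pi_G=\lambda\,\pi_G$ with $\pi_G\colon\mathbb{A}\to G\otimes G^\vee$ a \emph{mono}morphism (the end over the one-object subcategory $G//B$ is an equalizer into $G\otimes G^\vee$), one cannot cancel $\pi_G$ on the right, and ``projective $=$ injective'' does not repair this: the subobject $\mathbb{A}\subset G\otimes G^\vee$ could a priori lie inside a generalized eigenobject of $\nu^\ell\otimes\id$ without $\nu^\ell$ being scalar. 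Note also that Lemma~\ref{lemmapropmor} requires the source object to be \emph{simple} and is applied in the paper only to the unit $I$; the generator $G$ is typically not simple, so that lemma cannot play the role you assign to it. More fundamentally, the order of $\widehat{\cat{A}}(d)$ is the order of the induced map on the \emph{vector space} $\widehat{\cat{A}}(H_g)\cong\cat{A}(\mathbb{A}^{\otimes g},I)^*$, not of the endomorphism $\tau$ of the object $\mathbb{A}$; an automorphism of full order on an object can induce a map of smaller order on a Hom space, and Remark~\ref{rembeyondrigid} shows that for non-rigid ribbon Grothendieck--Verdier categories the relevant Hom space can even vanish. The lower bound therefore genuinely requires exhibiting a nonzero vector on which the twist acts with order exactly $|\nu|$. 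The paper does this by (i) proving $\cat{A}(I,\mathbb{A}^{\otimes(g-1)})\neq 0$ (Proposition~\ref{prophomnonzero}, resting on Lemmas~\ref{lemmapropmor} and~\ref{lemmahommap} applied to the simple unit), and (ii) embedding the $B$-bimodule $\cat{A}(I,\mathbb{A}^{\otimes(g-1)})\otimes B$ equivariantly into $\cat{A}(G,\mathbb{A}^{\otimes(g-1)}\otimes G)$ and passing to relative centers, so that $\cat{A}(I,\mathbb{A}^{\otimes(g-1)})\otimes Z(B)$ sits inside $\widehat{\cat{A}}(H_g)^*$ with the twist acting by multiplication by $\nu$ on $Z(B)$; the element $1_B$ then detects the full order $|\nu|=|\theta|$. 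None of this is in your proposal, so as written the proof does not go through.

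On the positive side, your idea for eliminating the projective scalar is sound and arguably simpler than the paper's: once one knows (via the embedding above) that $\widehat{\cat{A}}(d)^\ell=\lambda\id$ forces $\nu^\ell=\lambda 1_B$, naturality of $\theta$ together with an epimorphism $G^{\oplus m}\twoheadrightarrow I$ gives $\theta_I^\ell=\lambda\id_I$, whence $\lambda=1$ since $\theta_I=\id_I$. The paper instead runs a doubling argument with $\bar{\cat{A}}\boxtimes\cat{A}$; your route would avoid that, but it only becomes available after the detection step has been carried out correctly.
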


\begin{remark}
Dehn twists about meridians generate the twist subgroup $\catf{Tw}(H_g)\subset \Map(H_g)$ that fits into the short exact sequence
\begin{align}
	1 \to \catf{Tw}(H_g)\to\Map(H_g)\to\catf{Out}(F_g)\to 1 \ , 
	\end{align}
where $\catf{Out}(F_g)$ is group of outer automorphisms of the free group on $g$ generators, see~\cite{luft}. 
Theorem~\ref{thmmain} tells us therefore that all the representations $\widehat{\cat{A}}(H_g)$
of $\Map(H_g)$ descend to a representation of $\catf{Out}(F_g)$ if and only if $\theta$ is 
 the identity.
Note that the latter would always imply that the braiding of $\cat{A}$ is symmetric.
\end{remark}

We now prepare the proof of Theorem~\ref{thmmain}:
Let $\mathbb{A}=\int_{X \in \cat{A}} X \otimes X^\vee\in \cat{A}$ be the canonical end of $\cat{A}$
(the Grothendieck-Verdier duality $D$ of $\cat{A}$ is by definition the rigid duality $-^\vee$, as explained on page~\pageref{refrigidduality}). 
We refer to \cite[Section~2.2]{fss}
for an introduction to (co)ends in finite linear categories. We will use the convention 
 $X^{\otimes 0}=I$ for every object $X$.

For a $k$-algebra $T$ and a $T$-bimodule $M$, we denote by
\begin{align}
Z(T;M):= \{m\in M \, | \, t.m=m.t \ \text{for all}\ t\in T\} \subset M
\end{align}
the center of $M$ relative $T$ (also known as the zeroth Hochschild cohomology $HH^0(T;M)$
 of $T$ with coefficients in $M$).
If $M$ is $T$ with left and right regular $T$-action, then $Z(T;T)=Z(T)$, the center of the algebra $T$.
Note that a $T$-bimodule $M$ can be seen as a functor $M: (\star \DS T)^\opp \otimes \star \DS T\to\Vect_k$, where $\star \DS T$ is the $k$-linear category with one object $\star$ whose endomorphism algebra is $T$.
Then $Z(T;M)$ can be expressed as the end
\begin{align}
 Z(T;M)\cong
 	\int_{\star \DS T}M \ .          \label{eqnexpend}
	\end{align}

\begin{lemma}\label{lemmablockforH}
	Let $\cat{A}$ be a unimodular finite ribbon category. Then
	\begin{align}
		\widehat{\cat{A}}(H_{g}) \cong Z\left(B; \cat{A}\left(G,G\otimes \mathbb{A}^{\otimes (g-1)}\right)\right) ^*\end{align}
		 for $g\ge 1$, and the Dehn twist about the
		 non-separating meridian $\gamma$ from
		  Theorem~\ref{thmmain}
		  acts by $\nu=\theta_G$ on the $G$ in the left argument of the hom.
	\end{lemma}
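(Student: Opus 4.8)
The plan is to reduce the computation of $\widehat{\cat{A}}(H_g)$ to a gluing problem and then to rewrite the resulting coend over $\cat{A}$ as a relative center over $B=\End_\cat{A}(G)$ by means of the projective generator $G$. Topologically I present $H_g$ as the self-gluing of a genus-$(g-1)$ handlebody $H_{g-1}^{(2)}$ equipped with two boundary disks: attaching a $1$-handle to a connected handlebody raises the genus by one, and the belt circle of that last handle bounds its co-core disk, so it is a non-separating meridian, which I take to be the curve $\gamma$ of Theorem~\ref{thmmain}. Accordingly there are two tasks: (a) compute the two-variable functor $F(X,Y):=\widehat{\cat{A}}(H_{g-1}^{(2)};X,Y)$, and (b) perform the final self-gluing while tracking the Dehn twist about the gluing neck $\gamma$.

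For (a) I proceed by induction on $g$, using the gluing axiom recalled before Lemma~\ref{lemmablockforH}. The base case $g=1$ is the cylinder, whose value is $F(X,Y)\cong\cat{A}(X,Y)^{*}$ and which gives $B^{*}$ at $X=Y=G$, as recalled above the lemma. In the inductive step each additional $1$-handle is itself a genus-zero self-gluing whose value is fixed by the ribbon Grothendieck--Verdier (genus-zero, cyclic) structure; feeding the rigid coevaluation into the glued slots contributes one tensor factor of the canonical end $\mathbb{A}=\int_{X\in\cat{A}}X\otimes X^{\vee}$ to the outgoing label. After all $g-1$ handles of $H_{g-1}^{(2)}$ are incorporated this gives $F(X,Y)\cong\cat{A}(X,\,Y\otimes\mathbb{A}^{\otimes(g-1)})^{*}$. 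It is here that unimodularity enters: it guarantees that the handle contribution is the untwisted end $\mathbb{A}$, with no correction by the distinguished invertible object $\alpha\cong I$, and that the nondegenerate pairing used for gluing is symmetric in the way required to close the handlebody up.

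For (b) the self-gluing contracts the outgoing slot against the incoming one through the copairing $\Delta$ of $\kappa$; exactly as in the genus-one trace this identifies the two labels and produces
\[
\widehat{\cat{A}}(H_g)\;\cong\;\int^{W}\cat{A}\bigl(W,\,W\otimes\mathbb{A}^{\otimes(g-1)}\bigr)^{*}\;\cong\;\Bigl(\int_{W\in\cat{A}}\cat{A}\bigl(W,\,W\otimes\mathbb{A}^{\otimes(g-1)}\bigr)\Bigr)^{*},
\]
the second isomorphism dualizing the coend into an end since the integrand is finite-dimensional. The end $\int_{W}\cat{A}(W,W\otimes\mathbb{A}^{\otimes(g-1)})=\operatorname{Nat}(\id,\,-\otimes\mathbb{A}^{\otimes(g-1)})$ is then evaluated on the projective generator: because $G$ is a projective generator and $-\otimes\mathbb{A}^{\otimes(g-1)}$ is right exact (the monoidal product being exact by rigidity), a natural family is determined by its component at $G$ subject to $\End_\cat{A}(G)$-naturality. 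By \eqref{eqnexpend} this end is therefore $Z\bigl(B;\cat{A}(G,G\otimes\mathbb{A}^{\otimes(g-1)})\bigr)$, the bimodule structure being pre- and postcomposition with $B$ on the source $G$ and on the leading $G$ of the target. Dualizing yields the asserted isomorphism. Finally, the Dehn twist about $\gamma$ is carried by the cylinder neighborhood of the gluing neck, on which it acts by the (dual of) composition with $\theta_G$, as recalled before the lemma; under the identifications above this is the action of $\nu=\theta_G$ on the object threaded through $\gamma$, namely the $G$ in the left argument of the hom.

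I expect the main obstacle to be the reduction in (b) of the end over all of $\cat{A}$ to the relative center over $B$: one must verify that the dinaturality (wedge) condition defining the end collapses exactly to the $B$-centrality condition once the integrand is evaluated on the projective generator, which relies on finiteness, on the projectivity of $G$, and on exactness of the monoidal product. A secondary difficulty, internal to (a), is the careful bookkeeping of the duality conventions in the repeated handle gluings and checking that unimodularity is precisely what turns the handle operator into the untwisted end $\mathbb{A}$.
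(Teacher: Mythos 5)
Your proposal is correct and follows essentially the same route as the paper: excision expresses the conformal block as an end of hom-spaces involving $\mathbb{A}^{\otimes (g-1)}$, which is then restricted to the one-object subcategory on the projective generator $G$ (the paper invokes Proposition~5.1.7 of Kerler--Lyubashenko for exactly the reduction you flag as the main obstacle) to yield the relative center $Z\left(B;\cat{A}\left(G,G\otimes\mathbb{A}^{\otimes(g-1)}\right)\right)$, with the Dehn twist tracked as $\nu=\theta_G$ on the left slot throughout. The one place where you are vaguer than the paper is the gluing/self-duality step: where you appeal loosely to unimodularity, the paper makes this precise by converting the end $\int_{X}X^\vee\otimes X$ into a coend via the left Nakayama functor $\nakal\cong -\otimes\alpha$ together with $\alpha\cong I$ and the self-duality $\mathbb{A}\cong\mathbb{A}^\vee$.
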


Lemma~\ref{lemmablockforH}, that we will prove using the excision property of spaces of conformal blocks, should be understood in the way that, after cutting $H_g$ at $\gamma$, we obtain  a handlebody of genus $g-1$ with two embedded disks in its boundary, 
such that, after labeling both these disks by $G$, we obtain
 a $B$-bimodule $\cat{A}\left(G,G\otimes \mathbb{A}^{\otimes (g-1)}\right)$
whose relative center (zeroth Hochschild cohomology) is $\widehat{\cat{A}}(H_{g})^*$. This is essentially the perspective on excision from~\cite{dmf}, but formulated here in a different context and, most importantly, equivariantly with respect to a Dehn twist action.

\begin{proof}[\slshape Proof of Lemma~\ref{lemmablockforH}]
	 We can obtain the value of the ansular functor $\widehat{\cat{A}}$ on $H_g$  by the  excision result for the spaces of conformal blocks of ansular functors; we refer to~\cite[Theorem 6.4]{cyclic} (there the theory is set up using left exact functor instead of right exact ones as in this paper; see \cite[Section~8.1]{brochierwoike} for the version using right exact functors). We then find
	\begin{align}
	\widehat{\cat{A}}(H_g)&\cong \cat{A}(\mathbb{A}^{\otimes g},I)^*\cong \cat{A} \left(   \int_{X\in\cat{A}} X \otimes X^\vee \otimes \mathbb{A}^{\otimes (g-1)},I      \right)^* \cong
	\cat{A} \left(   \int_{X\in\cat{A}} X^\vee \otimes X \otimes \mathbb{A}^{\otimes (g-1)},I      \right)^*
	\end{align}
	In the last step, we move the $\vee$ from the right to the left dummy variable using the pivotal structure.
	From the construction of ansular functors, it follows also that, under these isomorphisms, the Dehn twist about $\gamma$ acts on $\cat{A} \left(   \int_{X\in\cat{A}} X^\vee \otimes X \otimes \mathbb{A}^{\otimes (g-1)},I      \right)^*$
	by $\theta_{X^\vee}=\theta_X^\vee$ on the $X^\vee$ under the end. Thanks to the universal property of the end, we can say equivalently that the Dehn twist about $\gamma$ acts by $\theta_{X}$ on the $X$.
	
Let us now simplify this expression:	We obtain $\int_{X\in\cat{A}} X^\vee \otimes X$ by applying $\otimes ((-)^\vee \boxtimes \id_{\cat{A}})$ to $\int_{X\in\cat{A}} X \boxtimes X\in \cat{A}^\opp \boxtimes \cat{A}$.
	Now we use the isomorphism $\int_{X\in\cat{A}} X \boxtimes X\cong \int^{X \in \cat{A}} X \boxtimes \nakal X$ in $\cat{A}^\opp \boxtimes \cat{A}$
	 with the left Nakayama functor $\nakal$, which
	  here is given by $\nakal = \int_{Y\in\cat{A}} \cat{A}(Y,-)\otimes Y \cong - \otimes \alpha$ with the distinguished invertible object $\alpha$ of $\cat{A}$~\cite[eq.~(3.52) \& Lemma~4.10]{fss}; note that this makes use of the pivotal structure. 
	 The isomorphism
$	\int^{X \in \cat{A}} X \boxtimes \nakal X\cong  \int_{X\in\cat{A}} X \boxtimes X$
is induced
by the maps \begin{align} X \boxtimes \nakal X \ra{\substack{\text{structure map of} \\ \text{the end defining $\nakal$}}}
X \boxtimes \cat{A}^\opp (X,Y)\otimes Y \ra{\text{evaluation}} Y \boxtimes Y \ . 
\end{align}
Therefore, the automorphism of $	\int^{X \in \cat{A}} X \boxtimes \nakal X$
induced by $\theta_X \boxtimes \id_{\nakal X}$ corresponds to the automorphism of $ \int_{X\in\cat{A}} X \boxtimes X$ induced by $\theta_X \boxtimes \id_X$. 
This implies
	$\int_{X\in\cat{A}} X^\vee \otimes X\cong \int^{X \in \cat{	A}} X^\vee \otimes \nakal X$, and the automorphisms induced by $\theta_{X^\vee}$ on the 
	 $X^\vee$ correspond to each other under this isomorphism.
	The finite ribbon category $\cat{A}$ being unimodular means precisely $\alpha \cong I$.
	Using the duality once more, we therefore arrive at
	\begin{align}
	\widehat{\cat{A}}(H_g)\cong \cat{A} \left(   \mathbb{A}^{\otimes (g-1)},\int_{X \in \cat{A}} X^\vee \otimes X      \right)^*
	\end{align}
	We denote by  $G // B$  the full subcategory of $\cat{A}$ whose single object is our projective generator $G$.
	In other words, $G//B$ has one object with endomorphism algebra $B$. 
	Then by \cite[Proposition~5.1.7]{kl} $\int_{X \in \cat{A}} X^\vee \otimes X\cong \int_{X \in G//B} X ^\vee \otimes X$
	and, since $\cat{A}(\mathbb{A}^{\otimes (g-1)},-)$ preserves ends,
	\begin{align}
	\widehat{\cat{A}}(H_g)^* \cong \int_{X \in G//B} \cat{A} \left(  X \otimes  \mathbb{A}^{\otimes (g-1)}, X      \right)
	\cong \int_{X \in G//B} \cat{A} \left(  X  , X   \otimes \mathbb{A}^{\otimes (g-1)}   \right) \ ,  \end{align}
	where we have also used that unimodularity  implies the self-duality of $\mathbb{A}$~\cite[Theorem~4.10]{shimizuunimodular}. 
	Finally, by~\eqref{eqnexpend}
	\begin{align}
	\int_{X \in G//B} \cat{A} \left(  X  , X   \otimes \mathbb{A}^{\otimes (g-1)}   \right) =Z\left(B; \cat{A}\left(G,G\otimes \mathbb{A}^{\otimes (g-1)}\right)\right) \ . 
	\end{align}
	From the description of the action of the Dehn twist about $\gamma$ on $\cat{A} \left(   \int_{X\in\cat{A}} X \otimes X^\vee \otimes \mathbb{A}^{\otimes (g-1)},I      \right)^*$ given above, it follows that the action of the same Dehn twist on $Z\left(B; \cat{A}\left(G,G\otimes \mathbb{A}^{\otimes (g-1)}\right)\right)$ is through the action of $\nu=\theta_G$ on the $G$ in the left slot of the hom in $\cat{A}\left(G,G\otimes \mathbb{A}^{\otimes (g-1)}\right)$.
	\end{proof}

\begin{corollary}\label{corogamma}
	The order of the action of the Dehn twist about $\gamma$ on $	\widehat{\cat{A}}(H_{g}) $ is the order $o_\gamma$ 
	of the action of $\nu$ on $Z\left(B; \cat{A}\left(G,G\otimes \mathbb{A}^{\otimes (g-1)}\right)\right)$. 
	\end{corollary}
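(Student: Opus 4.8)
The plan is to obtain Corollary~\ref{corogamma} as the immediate linear-algebraic consequence of Lemma~\ref{lemmablockforH}, which already packages both the identification of the underlying vector space and the description of the Dehn twist action. Abbreviate $M := \cat{A}\left(G,G\otimes \mathbb{A}^{\otimes (g-1)}\right)$, viewed as the $B$-bimodule from that lemma, and let $\varphi$ be the endomorphism of $Z(B;M)$ induced by precomposition with $\nu=\theta_G$ in the left argument $G$. First I would confirm that $\varphi$ is a well-defined automorphism of $Z(B;M)$: it is invertible because $\theta_G$ is, and it maps the relative center into itself because $\nu\in Z(B)$ by Proposition~\ref{proporder}, so that precomposition with $\nu$ commutes both with the left $B$-action (by centrality) and with the right $B$-action (which operates on the target rather than the source of the hom). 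This makes the number $o_\gamma:=|\varphi|$ meaningful, exactly as in the statement.

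Next I would transport the order through the isomorphism of Lemma~\ref{lemmablockforH}. That lemma gives $\widehat{\cat{A}}(H_g)\cong Z(B;M)^*$ and identifies the Dehn twist about $\gamma$ with the linear transpose $\varphi^*$ of $\varphi$ on this dual space. Hence the whole claim reduces to the elementary fact that a linear automorphism of a finite-dimensional vector space and its transpose share the same order. I would verify this by noting $(\varphi^*)^\ell=(\varphi^\ell)^*$ for every $\ell\ge 0$ together with the injectivity of transposition on $\End$-spaces, so that $\varphi^\ell=\id$ holds if and only if $(\varphi^*)^\ell=\id$. This yields $|\varphi^*|=|\varphi|=o_\gamma$, which is precisely the order of the Dehn twist action on $\widehat{\cat{A}}(H_g)$.

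There is essentially no obstacle at this step: all of the topological and categorical substance has already been absorbed into Lemma~\ref{lemmablockforH}, and the Corollary is its purely formal shadow. The only points demanding a line of justification are the equivariance of the isomorphism—asserted as part of the lemma—and the triviality that dualization preserves the order of a finite-order automorphism. Accordingly I would keep the proof to a few lines. I would also flag, for orientation, that this Corollary merely reduces Theorem~\ref{thmmain} to the computation of $o_\gamma$ and its comparison with $|\theta|$ in $\PGL(\widehat{\cat{A}}(H_g))$; that comparison, rather than anything here, is where the remaining work lies.
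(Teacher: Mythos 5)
Your proposal is correct and matches the paper's intent: the paper states Corollary~\ref{corogamma} without proof, treating it as an immediate consequence of Lemma~\ref{lemmablockforH}, and your argument simply makes explicit the two routine points involved (that precomposition with the central element $\nu$ is a well-defined automorphism of the relative center, and that a finite-dimensional linear automorphism and its transpose have the same order). Nothing is missing.
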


	One of the algebraic ingredients for the proof of Theorem~\ref{thmmain} is the following Lemma that might be well-known, but whose proof we spell out in lack of a reference. We thank Ehud Meir for providing helpful comments on this statement as well as ideas for its proof. 
	
	\begin{lemma}\label{lemmapropmor}
		Let $\cat{A}$ be a finite category and $X,Y \in \cat{A}$.
		If $X$ is simple, then the evaluation map
		\begin{align}\varepsilon_{X,Y}: \cat{A}(X,Y)\otimes X \to Y
		\end{align}
		is a monomorphism.
	\end{lemma}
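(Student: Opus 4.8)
The plan is to make the copower explicit and then reduce the statement to a length count. Since $\cat{A}$ is finite, the morphism space $V := \cat{A}(X,Y)$ is a finite-dimensional $k$-vector space; choosing a basis $f_1,\dots,f_n$ of $V$ (with $n = \dim_k V$) identifies the copower $\cat{A}(X,Y)\otimes X$ with the direct sum $X^{\oplus n}$, and under this identification $\varepsilon_{X,Y}$ becomes the morphism $(f_1,\dots,f_n)\colon X^{\oplus n}\to Y$ whose restriction to the $i$-th summand is $f_i$. To prove $\varepsilon_{X,Y}$ is a monomorphism I would show that its kernel has length zero, using that every object of $\cat{A}$ has finite length (as $\cat{A}$ is Artinian).

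First I would factor $\varepsilon_{X,Y}$ through its image $I := \im \varepsilon_{X,Y}\subseteq Y$, which is the sum $\sum_{i} \im f_i$ of the images of the components. Since $X$ is simple, $X^{\oplus n}$ is semisimple and isotypic of type $X$; hence its quotient $I$ is again semisimple and isotypic of type $X$, so $I\cong X^{\oplus m}$ for some $m\le n$. Here I use Schur's lemma: $k$ is algebraically closed, so $\End_\cat{A}(X)\cong k$, and therefore a semisimple object all of whose composition factors are $\cong X$ is literally a direct sum of copies of $X$, the number of copies being the $k$-dimension of $\cat{A}(X,-)$ evaluated on it.

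The key step is to identify $m$ with $n$. Left-exactness of $\cat{A}(X,-)$ applied to the monomorphism $I\hookrightarrow Y$ gives an injection $\cat{A}(X,I)\hookrightarrow \cat{A}(X,Y)$. This is in fact an isomorphism: every morphism $X\to Y$ is a linear combination of the $f_i$, so its image is contained in $\sum_i \im f_i = I$, and it therefore factors through $I\hookrightarrow Y$. Consequently $m = \dim_k \cat{A}(X,I) = \dim_k\cat{A}(X,Y) = n$. Since $\ell(X^{\oplus n}) = n = m = \ell(I)$, the short exact sequence $0\to \ker\varepsilon_{X,Y}\to X^{\oplus n}\to I\to 0$ forces $\ell(\ker\varepsilon_{X,Y}) = 0$, i.e.\ $\varepsilon_{X,Y}$ is a monomorphism.

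The one point that requires genuine care, and the main obstacle, is the equality $m=n$, which is exactly where simplicity of $X$ and algebraic closedness of $k$ enter. Without $\End_\cat{A}(X)\cong k$ the copower $\cat{A}(X,Y)\otimes X$ would contain $\dim_k\cat{A}(X,Y)$ copies of $X$ while the isotypic image $I$ would contain only $\dim_k\cat{A}(X,Y)/\dim_k\End_\cat{A}(X)$ of them, and $\varepsilon_{X,Y}$ would already fail to be injective for $Y=X$. So the argument is not merely formal: it rests on Schur's lemma for simple objects over the algebraically closed field $k$.
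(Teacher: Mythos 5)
Your proof is correct and follows essentially the same route as the paper's: both identify the image of $\varepsilon_{X,Y}$ as $X^{\oplus m}$ via semisimplicity and Schur's lemma, and both deduce $m=n$ from the fact that every morphism $X\to Y$ factors through the image, so that $\cat{A}(X,\im\varepsilon_{X,Y})\to\cat{A}(X,Y)$ is an isomorphism. The only cosmetic difference is the final step, where you conclude by additivity of length while the paper uses a small commutative diagram to exhibit $\cat{A}(X,Y)\otimes X\to\im\varepsilon_{X,Y}$ directly as an isomorphism.
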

	
	If $X$ is not simple, the statement is wrong: Then the map $\cat{A}(X,X)\otimes X \to X$ can generally not be a monomorphism for dimension reasons.

	\begin{proof}
	For the proof, we identify $\cat{A}$ with finite-dimensional modules over $B=\End_\cat{A}(G)$, where $G$ is a projective generator.
		Let $S$ denote the image of ${\varepsilon}_{X,Y}$ and $\widetilde{\varepsilon}_{X,Y} \colon \cat{A}(X,Y)\otimes X \to S$ the epimorphism obtained by restriction in range; put differently,  $S = \sum_{f \in \cat{A}(X,Y)} f(X)\subset Y$. With a basis $b_1, \dots , b_n $ of $ \cat{A}(X,Y)$, we have $S=\sum_{i=1}^n b_i(X)$, thereby proving that there is an epimorphism $X^{\oplus n} \to S$. As a quotient of the semisimple object $X^{\oplus n}$, the object $S$ is also semisimple, i.e.\ $S=\bigoplus_{j=1}^m S_j$ with simple objects $S_1 , \dots, S_m$. We have $\cat{A}(X,S_j) \neq 0$ for all $j=1,\dots,m$ because otherwise $X^{\oplus n} \to S$ could not be an epimorphism. By Schur's Lemma this implies $X\cong S_j$ for $1\le j\le m$
		 and hence
		$S\cong X^{\oplus m}$. 
		Thanks to $\cat{A}(X,S) \cong k^m$,
		the map $\varepsilon_{X,S} : \cat{A}(X,S) \otimes X \to S$ can now be identified with the 
		isomorphism $k^m \otimes X \cong X^{\oplus m}\cong S$. 
		
		Consider now the commutative diagram
		\begin{equation}
			\begin{tikzcd}
				\cat{A}(X,Y)\otimes X	 \ar[rrrdd,"\widetilde{\varepsilon}_{X,Y}"]      \\  \\
				\cat{A}(X,S) \otimes X  \ar[rrr,swap,"\varepsilon_{X,S}"] \ar[uu,"       S \subset Y    "]   &&&      S \ . 
			\end{tikzcd}
		\end{equation}
		The vertical map induced by $S\subset Y$ is a monomorphism because $\cat{A}(X,-)$ is left exact,
		and the tensoring of objects in $\cat{A}$ with vector spaces is exact (therefore both functors
		preserve monomorphisms).  Since all maps $X \to Y$ factor by definition
		 through $S$, the map $\cat{A}(X,S) \otimes X \to \cat{A}(X,Y)\otimes X$ is also an epimorphism and hence an isomorphism. This implies that $\widetilde{\varepsilon}_{X,Y}$ is an isomorphism and concludes the proof.
	\end{proof}

	\begin{lemma}\label{lemmahommap}
		Let $\cat{A}$ be a monoidal category in $\Rexf$ with exact monoidal product and simple unit.
		Then the map
			\begin{align} \cat{A}(I,X) \otimes \cat{A}(I,Y) \to \cat{A}(I,X\otimes Y) \quad \text{for}\quad X,Y \in \cat{A}\label{eqnimagemap}
			\end{align} induced by the monoidal product $\otimes$
			is a monomorphism.
	\end{lemma}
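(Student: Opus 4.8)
The plan is to deduce this from Lemma~\ref{lemmapropmor} applied to the simple object $I$, together with the exactness of the monoidal product. Write $\mu$ for the map~\eqref{eqnimagemap}. Since $I$ is simple, Lemma~\ref{lemmapropmor} tells us that the three evaluation maps $\varepsilon_{I,X}\colon\cat{A}(I,X)\otimes I\to X$, $\varepsilon_{I,Y}\colon\cat{A}(I,Y)\otimes I\to Y$ and $\varepsilon_{I,X\otimes Y}\colon\cat{A}(I,X\otimes Y)\otimes I\to X\otimes Y$ are all monomorphisms (here the product of a vector space with an object denotes the copower, which is exact).

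First I would form the tensor product of the first two monomorphisms. Factoring $\varepsilon_{I,X}\otimes\varepsilon_{I,Y}$ as $(\id_X\otimes\varepsilon_{I,Y})\circ(\varepsilon_{I,X}\otimes\id_{\cat{A}(I,Y)\otimes I})$ and using that $\otimes$ is exact in each variable (so that $X\otimes-$ and $-\otimes(\cat{A}(I,Y)\otimes I)$ preserve monomorphisms), I conclude that
\[
\varepsilon_{I,X}\otimes\varepsilon_{I,Y}\colon (\cat{A}(I,X)\otimes I)\otimes(\cat{A}(I,Y)\otimes I)\to X\otimes Y
\]
is a monomorphism. Using the coherence identifying the copowers with the monoidal product together with the unit isomorphism $I\otimes I\cong I$, its source is isomorphic to $(\cat{A}(I,X)\otimes\cat{A}(I,Y))\otimes I$, so we obtain a monomorphism $m\colon (\cat{A}(I,X)\otimes\cat{A}(I,Y))\otimes I\to X\otimes Y$.

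The next step, and the heart of the argument, is to verify the factorization $m=\varepsilon_{I,X\otimes Y}\circ(\mu\otimes\id_I)$. Unwinding the definition of the evaluation maps does this: choosing bases $\{f_i\}$ of $\cat{A}(I,X)$ and $\{g_j\}$ of $\cat{A}(I,Y)$ identifies the source with $\bigoplus_{i,j}I$, and the $(i,j)$-summand is sent through $f_i\otimes g_j$ (after the unit isomorphism) by $m$, whereas $\varepsilon_{I,X\otimes Y}\circ(\mu\otimes\id_I)$ sends it through $\mu(f_i\otimes g_j)=(f_i\otimes g_j)\circ(\text{unit iso})$. These agree, so both composites coincide. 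I expect this bookkeeping with the coherence isomorphisms to be the only genuinely fiddly point.

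Finally, since $m$ is a monomorphism and $m=\varepsilon_{I,X\otimes Y}\circ(\mu\otimes\id_I)$, the left factor $\mu\otimes\id_I$ is itself a monomorphism. Here $\mu\otimes\id_I$ is the image of the linear map $\mu$ under the copower functor $-\otimes I$, which is exact and hence preserves kernels; moreover $V\otimes I\cong I^{\oplus\dim V}$ vanishes only for $V=0$, because the unit of a category with simple unit is nonzero (page~\pageref{pageftcnonzero}). Thus $(\ker\mu)\otimes I\cong\ker(\mu\otimes\id_I)=0$ forces $\ker\mu=0$, i.e.\ $\mu$ is injective, which is the claim.
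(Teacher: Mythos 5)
Your argument is correct, and it rests on the same two pillars as the paper's proof (Lemma~\ref{lemmapropmor} applied to the simple unit, plus exactness of the monoidal product), but it takes a slightly different route to the conclusion. The paper tensors the single monomorphism $\cat{A}(I,X)\otimes I\to X$ with $Y$ to get a monomorphism $\cat{A}(I,X)\otimes Y\to X\otimes Y$ in $\cat{A}$, and then applies the \emph{left exact hom functor} $\cat{A}(I,-)$, implicitly identifying $\cat{A}(I,\cat{A}(I,X)\otimes Y)$ with $\cat{A}(I,X)\otimes\cat{A}(I,Y)$ via the copower adjunction; the resulting map of vector spaces is exactly $\mu$. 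You instead tensor the \emph{two} evaluation monomorphisms together, verify the factorization $m=\varepsilon_{I,X\otimes Y}\circ(\mu\otimes\id_I)$, deduce that the left factor $\mu\otimes\id_I$ is a monomorphism, and then descend from the copower back to $\mu$ itself using exactness of $-\otimes I$ and the fact that $V\otimes I=0$ forces $V=0$ when $I\neq 0$. Both routes are sound; yours makes explicit the identification of the induced map with $\mu$ (which the paper leaves to the reader), at the cost of the extra "un-copowering" step at the end and one superfluous invocation of Lemma~\ref{lemmapropmor} (the monomorphy of $\varepsilon_{I,X\otimes Y}$ is never actually used --- only the factorization through it is). The paper's version is shorter because applying $\cat{A}(I,-)$ lands directly in $\Vect$ and no descent argument is needed.
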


	\begin{proof}
		The map $\cat{A}(I,X) \otimes I \to X$ is a monomorphism by Lemma~\ref{lemmapropmor}.
		Since $-\otimes Y$ is exact, $\cat{A}(I,X) \otimes Y \to X\otimes Y$ is a monomorphism.
		Its image~\eqref{eqnimagemap} under the left exact functor $\cat{A}(I,-)$ is a monomorphism as well.
		\end{proof}
	
	\spaceplease	
	\begin{proposition}\label{prophomnonzero}
		Let $\cat{A}$ be a finite tensor category.
		\begin{pnum}
			\item The vector space $\cat{A}(I,\mathbb{A}^{\otimes g})$ is non-zero for all $g \ge 0$. \label{prophomnonzeroi}
			\item 
			The morphism \label{prophomnonzeroii}
			\begin{align}
				\cat{A}(I,\mathbb{A}^{\otimes (g-1)}) \otimes G \to \mathbb{A}^{\otimes (g-1)} \otimes G
			\end{align}
			  is a non-zero monomorphism
			  for all $g\ge 1$. 
			\end{pnum}
		\end{proposition}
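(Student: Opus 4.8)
The plan is to prove both parts formally, bootstrapping off the two preceding lemmas: part~(i) by induction on $g$ using Lemma~\ref{lemmahommap}, and part~(ii) by the same mechanism as the proof of Lemma~\ref{lemmahommap} combined with part~(i). The only step that carries genuine content is the base case $\cat{A}(I,\mathbb{A})\neq 0$; everything else is a formal consequence of the finiteness and rigidity hypotheses.

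For part~(i), I would induct on $g$. The base cases are $g=0$ and $g=1$. For $g=0$ the convention $\mathbb{A}^{\otimes 0}=I$ reduces the claim to $\cat{A}(I,I)\neq 0$, which holds because the unit is simple (so $\cat{A}(I,I)\cong k$; in any case it contains $\id_I$). For $g=1$ I would exhibit a canonical nonzero morphism $u\colon I\to\mathbb{A}$. The coevaluations $\mathrm{coev}_X\colon I\to X\otimes X^\vee$ form a wedge for the end $\mathbb{A}=\int_{X}X\otimes X^\vee$, since for $f\colon X\to Y$ one has the standard identity $(f\otimes\id_{X^\vee})\circ\mathrm{coev}_X=(\id_{Y}\otimes f^\vee)\circ\mathrm{coev}_Y$; the universal property of the end then produces $u\colon I\to\mathbb{A}$. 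Composing $u$ with the structure map $\mathbb{A}\to I\otimes I^\vee\cong I$ recovers $\mathrm{coev}_I=\id_I\neq 0$, so $u\neq 0$ and hence $\cat{A}(I,\mathbb{A})\neq 0$. For the inductive step, Lemma~\ref{lemmahommap} applies (as $\cat{A}$ is a finite tensor category, it has exact monoidal product and simple unit) and furnishes a monomorphism $\cat{A}(I,\mathbb{A}^{\otimes(g-1)})\otimes\cat{A}(I,\mathbb{A})\hookrightarrow\cat{A}(I,\mathbb{A}^{\otimes g})$; by the induction hypothesis and the $g=1$ case the source is a nonzero vector space, so the target is nonzero.

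For part~(ii), I would first identify the displayed morphism as $\varepsilon_{I,\mathbb{A}^{\otimes(g-1)}}\otimes\id_G$, i.e.\ the canonical evaluation $\cat{A}(I,\mathbb{A}^{\otimes(g-1)})\otimes I\to\mathbb{A}^{\otimes(g-1)}$ tensored on the right with $G$ (using $I\otimes G\cong G$). That it is a monomorphism is immediate, exactly as in the proof of Lemma~\ref{lemmahommap}: the evaluation is a monomorphism by Lemma~\ref{lemmapropmor} because $I$ is simple, and $-\otimes G$ preserves this monomorphism since the monoidal product of a finite tensor category is exact. For non-vanishing, $\cat{A}(I,\mathbb{A}^{\otimes(g-1)})\neq 0$ by part~(i) (here $g-1\ge 0$) and $G\neq 0$, so the domain $\cat{A}(I,\mathbb{A}^{\otimes(g-1)})\otimes G$ is a nonzero object; a monomorphism out of a nonzero object cannot be the zero morphism, whence the map is non-zero.

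I expect no substantial obstacle: the only input requiring an argument about $\mathbb{A}$ itself is the base case $\cat{A}(I,\mathbb{A})\neq 0$, which is handled by the canonical unit $u\colon I\to\mathbb{A}$. The main point to be careful about is the bookkeeping in the inductive step—verifying that the hypotheses of Lemma~\ref{lemmahommap} hold and, crucially, that \emph{non-vanishing} (and not merely the monomorphism property) propagates, which it does because a monomorphism with nonzero domain forces a nonzero target. No new idea beyond Lemmas~\ref{lemmapropmor} and~\ref{lemmahommap} should be needed.
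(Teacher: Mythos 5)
Your proof is correct and follows essentially the same route as the paper: part~(ii) is argued identically (Lemma~\ref{lemmapropmor} plus exactness of $-\otimes G$, then non-vanishing of the domain), and part~(i) reduces to the case $g=1$ via Lemma~\ref{lemmahommap} exactly as in the paper. The only divergence is the base case $\cat{A}(I,\mathbb{A})\neq 0$: the paper identifies $\cat{A}(I,\mathbb{A})\cong \int_{X\in\cat{A}}\cat{A}(X,X)\cong Z(B)$ and uses $B\neq 0$, whereas you exhibit the unit $u\colon I\to\mathbb{A}$ of the canonical end algebra and observe that the structure map $\mathbb{A}\to I\otimes I^\vee\cong I$ splits it; both are valid, and in fact the paper uses precisely your splitting argument for $u$ later, in the proof of Theorem~\ref{thmsep}.
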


	\begin{proof} First of all note that $\cat{A}\neq 0$ as mentioned on page~\pageref{pageftcnonzero}.
		\begin{pnum}
			
			\item 	The case $g =0$ is clear.
			In the case $g=1$, we use \begin{align} \cat{A}(I,\mathbb{A})\cong \int_{X\in\cat{A}}\cat{A}(X,X)\cong Z(B) \ . 
			\end{align}
			(The identification of the end $\int_{X\in\cat{A}}\cat{A}(X,X)$ is with the 
			center $Z(B)$ is standard).
			Clearly, $Z(B)=0$ could only hold if $B=0$ which is forbidden since $\cat{A}\neq 0$.
			For $g \ge 2$, we have
			\begin{align} 0\neq \cat{A}(I,\mathbb{A})^{\otimes g} \subset \cat{A}(I,\mathbb{A}^{\otimes g}) \end{align} by Lemma~\ref{lemmahommap}.
			
			\item The evaluation 
			map $\cat{A}(I,\mathbb{A}^{\otimes (g-1)}) \otimes I \to \mathbb{A}^{\otimes (g-1)}$ is 
			a monomorphism 
			 by Lemma~\ref{lemmapropmor}. 
			After tensoring with $G$, we obtain, thanks to the exactness of the monoidal product, 
			a monomorphism $\cat{A}(I,\mathbb{A}^{\otimes (g-1)}) \otimes G \to \mathbb{A}^{\otimes (g-1)} \otimes G$.
			This monomorphism is non-zero because $\cat{A}(I,\mathbb{A}^{\otimes (g-1)}) \otimes G$ is isomorphic to
			$\dim \cat{A}(I,\mathbb{A}^{\otimes (g-1)})$ many copies of $G$, and
			the dimension of $ \cat{A}(I,\mathbb{A}^{\otimes (g-1)})$
			is non-zero (by statement~\ref{prophomnonzeroi})
		 and $G \neq 0$ (because $\cat{A}\neq 0$). 
			\end{pnum}
		\end{proof}

\begin{proof}[\slshape Proof of Theorem~\ref{thmmain}]
	We first prove that $d$ acts by an automorphism of $\widehat{\cat{A}}(H_g)$ of  order $|\theta|$; we  discuss the order in $\PGL(\widehat{\cat{A}}(H_g))$ afterwards.
	 After Corollary~\ref{corogamma}, it suffices to prove
	$o_\gamma = |\theta|$
	(recall that $o_\gamma$ was the order of the action on $Z\left(B; \cat{A}\left(G,G\otimes \mathbb{A}^{\otimes (g-1)}\right)\right) $ 
	with $\nu=\theta_G:G\to G$ on the $G$ in the left slot of the hom). 
	Clearly, $o_\gamma\le |\theta|$. It remains to prove $o_\gamma \ge |\theta|$. 
	To this end, we apply the left exact hom functor $\cat{A}(G,-)$ to the monomorphism from Proposition~\ref{prophomnonzero}~\ref{prophomnonzeroii} and obtain a monomorphism
	\begin{align}\label{eqntwobimodules}
		\underbrace{	\cat{A}(I,\mathbb{A}^{\otimes (g-1)}) \otimes \End_\cat{A}(G)}_{	\cat{A}(I,\mathbb{A}^{\otimes (g-1)}) \otimes B} \to  \cat{A}(G,\mathbb{A}^{\otimes (g-1)} \otimes G) \ . 
		\end{align}
		This is a monomorphism of $B$-bimodules if $B$ on the left hand side is equipped with the regular left and right action and if 
		$\cat{A}(G,\mathbb{A}^{\otimes (g-1)} \otimes G)$ is equipped with the bimodule structure explained in Lemma~\ref{lemmablockforH} (or rather its proof).
	This inclusion is equivariant with respect to the action with $\nu = \theta_G:G\to G$ on the $G$ in the left slot of the hom. 
	Taking $Z(B;-)$ of~\eqref{eqntwobimodules} preserves the monomorphism and yields a monomorphism
	\begin{align}
	\cat{A}(I,\mathbb{A}^{\otimes (g-1)}) \otimes Z(B) \to Z\left( B;  \cat{A}(G,\mathbb{A}^{\otimes (g-1)} \otimes G) \right) \ . 
	\end{align}
	We have used here the  fact  $Z(B;B)=Z(B)$. 
	
	If we denote the order of the action of $\nu$ on the left hand side by $o_\gamma'$, then $o_\gamma \ge o_\gamma'$. 
	Since  $\cat{A}(I,\mathbb{A}^{\otimes (g-1)}) \neq 0$
	by Proposition~\ref{prophomnonzero}~\ref{prophomnonzeroi}, the order $o_\gamma'$ is given by the order of the action of $\nu$ on $ Z(B)$. But $\nu$ is itself just an element of the center of $B$, so this order is just the order of the element $\nu$.
	This implies $o_\gamma'=|\nu|$, and hence $o_\gamma\ge o_\gamma' =|\nu|$.
	But $|\nu|=|\theta|$ by Proposition~\ref{proporder}.
	
	So far, we have proved that $d$ acts by an element of order $|\theta|$. But the order in $\PGL(\widehat{\cat{A}}(H_g))$ could be smaller. To exclude this possibility, assume that $\widehat{\cat{A}}(d)^\ell = \lambda \id$ for some $\ell < |\theta|$ with some $\lambda \in k^\times$ with $\lambda \neq 1$. If we denote by $\bar{\cat{A}}$ the same monoidal category as $\cat{A}$, but with braiding and ribbon twist inverted, then $\widehat{\bar{\cat{A}}\ }(d)= \widehat{\cat{A}}(d)^{-1}$  because the action of $d$ is locally just induced by the ribbon twist. 
	This implies $\widehat{\bar{\cat{A}}\ }(d)^\ell=\lambda^{-1}\id$. 
	 But now
	  $\widehat{\bar{\cat{A}} \boxtimes\cat{A}}\simeq \widehat{\bar{\cat{A}}\  } \boxtimes\widehat{\cat{A}}$ by the uniqueness of the extension from genus zero to the modular handlebody operad $\Hbdy$~\cite[Theorem 5.7]{mwansular} and
	$\left(\widehat{\bar{\cat{A}} \boxtimes\cat{A}}\right)(d)^\ell=\lambda^{-1}\id \otimes \lambda \id=\id \otimes \id$.
	This contradicts the already established statement
	that the order of $\widehat{\bar{\cat{A}} \boxtimes\cat{A}}(d)$ in $\catf{GL}(   \widehat{\bar{\cat{A}} \boxtimes\cat{A}}(H_g)   )$ must be $|\theta^{-1}\boxtimes\theta|=|\theta|$
	(note that
	$\bar{\cat{A}}\boxtimes\cat{A}$ is also a unimodular finite ribbon category, so the previously established statement applies). 
	\end{proof}

\begin{theorem}\label{thmmoddehn}
	Let $\cat{A}$ be a modular category 
	and denote by
	$\mathfrak{F}_{\! \cat{A}}$ its modular functor.
	Let $d$ be
	any Dehn twist 
	about a non-separating essential simple closed curve
	on a closed surface $\Sigma$
	with genus $g\ge 1$.
	Then $d$ acts on the space of conformal blocks $\mathfrak{F}_{\! \cat{A}}(\Sigma)$ by a linear automorphism $\mathfrak{F}_{\! \cat{A}}(d)$ whose order
	in $\PGL(\mathfrak{F}_{\! \cat{A}}(\Sigma))$
	is equal to the order $|\theta|$ of the ribbon structure of $\cat{A}$.
	In particular, $\mathfrak{F}_{\! \cat{A}}(d)$ has infinite order if and only if the ribbon structure has infinite order.
	\end{theorem}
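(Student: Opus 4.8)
The plan is to deduce Theorem~\ref{thmmoddehn} from Theorem~\ref{thmmain}, which already settles the analogous statement for the ansular functor $\widehat{\cat{A}}$ on handlebodies. First I would record that the hypotheses of Theorem~\ref{thmmain} are met: a modular category is in particular a finite ribbon category, and by \cite[Proposition~4.5]{eno-d} it is automatically unimodular (as already noted in Section~\ref{secmf}). Hence Theorem~\ref{thmmain} applies to $\cat{A}$ verbatim, and it suffices to transport its conclusion from handlebodies to their boundary surfaces.

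The bridge between surfaces and handlebodies is the restriction of the modular functor to the ansular functor: every modular functor restricts to an ansular functor along the map sending a handlebody $H$ to its boundary $\partial H$, so that $\mathfrak{F}_{\!\cat{A}}(\partial H)=\widehat{\cat{A}}(H)$ as vector spaces, and the restriction homomorphism $\Map(H)\to\Map(\partial H)$ (restriction of diffeomorphisms to the boundary) intertwines the handlebody-group action on $\widehat{\cat{A}}(H)$ with the mapping class group action on $\mathfrak{F}_{\!\cat{A}}(\partial H)$. Taking $H=H_g$ and $\Sigma=\partial H_g$, the value $\mathfrak{F}_{\!\cat{A}}(\Sigma)$ is identified with $\widehat{\cat{A}}(H_g)$, and a non-separating meridian of $H_g$ is in particular a non-separating essential simple closed curve on $\Sigma$ whose boundary Dehn twist is the image of the handlebody Dehn twist studied in Theorem~\ref{thmmain}. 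For this particular curve the two operators literally agree as elements of $\PGL(\mathfrak{F}_{\!\cat{A}}(\Sigma))=\PGL(\widehat{\cat{A}}(H_g))$, so Theorem~\ref{thmmain} yields that its order is exactly $|\theta|$.

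To upgrade from this one meridian to an arbitrary non-separating essential simple closed curve, I would invoke the change-of-coordinates principle (see \cite{farbmargalit}): $\Map(\Sigma)$ acts transitively on isotopy classes of non-separating essential simple closed curves on the closed surface $\Sigma$. Hence any such curve is carried to our chosen meridian by some mapping class $f$, and the corresponding Dehn twists are conjugate in $\Map(\Sigma)$, say $d=f\,d_\gamma\,f^{-1}$. Since $\mathfrak{F}_{\!\cat{A}}$ is a projective representation, the images $\mathfrak{F}_{\!\cat{A}}(d)$ and $\mathfrak{F}_{\!\cat{A}}(d_\gamma)$ are conjugate in $\PGL(\mathfrak{F}_{\!\cat{A}}(\Sigma))$ — the scalar ambiguity of the projective action drops out once we pass to $\PGL$ — and conjugate elements have the same order. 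This transports the order $|\theta|$ computed for the meridian to every non-separating curve, giving the theorem; the final assertion about infinite order is then immediate from $|\theta|=\infty$.

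The step I expect to require the most care is the compatibility in the middle paragraph: one must verify that the identification $\mathfrak{F}_{\!\cat{A}}(\Sigma)\cong\widehat{\cat{A}}(H_g)$ is genuinely equivariant for the restriction map $\Map(H_g)\to\Map(\Sigma)$, and that the meridian Dehn twist on $\Sigma$ is precisely the image of the handlebody meridian Dehn twist of Theorem~\ref{thmmain}, rather than merely an abstractly isomorphic operator. This equivariance is exactly what licenses reading off the order in $\PGL$ directly from the ansular computation; by comparison, the unimodularity input and the conjugacy argument are formal.
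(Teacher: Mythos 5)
Your proposal is correct and follows essentially the same route as the paper: both reduce to Theorem~\ref{thmmain} via the $\Map(H)$-equivariant identification $\widehat{\cat{A}}(H)\cong\mathfrak{F}_{\!\cat{A}}(\partial H)$ coming from the restriction of the modular functor to its underlying ansular functor. The only cosmetic difference is that the paper chooses the handlebody $H$ so that the given curve becomes a meridian, whereas you fix $H_g$ and conjugate the curve to a standard meridian by the change-of-coordinates principle; these are interchangeable, and your observation that conjugacy in $\PGL$ preserves the order correctly handles the projectivity of the representation.
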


\begin{proof}
	This is a consequence of Theorem~\ref{thmmain} and the factorization homology description of modular functors from~\cite{brochierwoike}:
	By restriction of $\mathfrak{F}_{\! \cat{A}}$ along the boundary map $\partial : \Hbdy \to \Surf$ we obtain an ansular functor that is equivalent
	to the modular extension $\widehat{\cat{A}}$ of $\cat{A}$ \cite[Theorem~6.2]{brochierwoike}. For any handlebody $H$ with $\partial H=\Sigma$, this tells us in particular
	that we have a $\Map(H)$-equivariant isomorphism 
	$\widehat{\cat{A}}(H) \ra{\cong}
	\mathfrak{F}_{\! \cat{A}}(\Sigma)$. 
	If we choose $H$ such that the Dehn twist $d$ is contained in the subgroup
	 $\Map(H)\subset \Map(\Sigma)$, the assertion follows from Theorem~\ref{thmmain}.
	\end{proof}

\begin{remark}
In the special case where $\cat{A}$ is given by finite-dimensional modules over the small quantum group (a certain ribbon factorizable Hopf algebra),
Theorem~\ref{thmmoddehn} amounts to
\cite[Proposition~5.1]{gai2} (earlier results in the same direction appear in \cite{bcgp}).
Even in this special case, the proof in \cite{gai2} is very different from ours;
most importantly, it uses three-dimensional methods.
We return to this special case in Example~\ref{examplesl2}.
\end{remark}

Since the mapping class group can be generated by Dehn twists about separating simple closed curves (see \cite[Theorem~4.11]{farbmargalit} for a textbook reference), 
Theorem~\ref{thmmoddehn} implies:

\begin{corollary}
	Let $\cat{A}$ be a modular category and $\Sigma$ a closed surface.
	Then the $\Map(\Sigma)$-action on the spaces of conformal blocks for $\Sigma$ has a system of generators that each have order $|\theta|$, where $\theta$ is the ribbon twist of $\cat{A}$. 	
	\end{corollary}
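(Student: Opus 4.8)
The plan is to combine the uniform order computation of Theorem~\ref{thmmoddehn} with a classical generating result for the mapping class group of a closed surface. The topological input I would use is that $\Map(\Sigma)$ is generated by finitely many Dehn twists about \emph{non-separating} essential simple closed curves; this is the content of Lickorish's generating set, sharpened by Humphries, as recorded in \cite{farbmargalit}. It is important that these generators are of non-separating type: separating twists all lie in the Torelli subgroup and therefore cannot by themselves generate $\Map(\Sigma)$, and moreover their order is governed by Theorem~\ref{thmmoddehn2} rather than by $|\theta|$.

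First I would fix such a generating set $d_1, \dots, d_N$ of $\Map(\Sigma)$, each $d_i$ being a Dehn twist about a non-separating essential simple closed curve on $\Sigma$. The $\Map(\Sigma)$-action on the space of conformal blocks $\mathfrak{F}_{\! \cat{A}}(\Sigma)$ is a priori only projective, because of the framing anomaly, so it is cleanest to record it as a homomorphism into $\PGL(\mathfrak{F}_{\! \cat{A}}(\Sigma))$. Under this homomorphism the images $\mathfrak{F}_{\! \cat{A}}(d_1), \dots, \mathfrak{F}_{\! \cat{A}}(d_N)$ generate the image of the entire $\Map(\Sigma)$-action, simply because the $d_i$ generate $\Map(\Sigma)$.

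Next I would invoke Theorem~\ref{thmmoddehn}, which states that the twist about \emph{any} non-separating essential simple closed curve acts with order exactly $|\theta|$ in $\PGL(\mathfrak{F}_{\! \cat{A}}(\Sigma))$. Applied to each $d_i$, this shows that every generator $\mathfrak{F}_{\! \cat{A}}(d_i)$ of the action has order $|\theta|$, which is precisely the assertion of the corollary.

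Since the argument is a direct combination of an off-the-shelf topological fact with the already established Theorem~\ref{thmmoddehn}, there is no computational obstacle. The only genuine point of care is conceptual, and it concerns the choice of generators: one must use generation by non-separating twists (matching the hypothesis of Theorem~\ref{thmmoddehn} and yielding the uniform order $|\theta|$) rather than separating ones. A minor bookkeeping point is to phrase the order statement in $\PGL(\mathfrak{F}_{\! \cat{A}}(\Sigma))$ rather than in $\catf{GL}(\mathfrak{F}_{\! \cat{A}}(\Sigma))$, exactly as in Theorem~\ref{thmmoddehn}, since the representation is only projective in general.
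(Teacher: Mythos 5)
Your proof is correct and is essentially the paper's own argument: the authors likewise combine Theorem~\ref{thmmoddehn} with the Dehn--Lickorish generation of $\Map(\Sigma)$ by Dehn twists about non-separating simple closed curves (citing \cite[Theorem~4.11]{farbmargalit}, though their prose contains a typo reading ``separating''). Your explicit remark that the generators must be of non-separating type, so that Theorem~\ref{thmmoddehn} rather than Theorem~\ref{thmmoddehn2} applies, is exactly the right point of care.
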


\spaceplease
\section{The action of a Dehn twist about a separating simple closed curve}
Next we turn to Dehn twists about \emph{separating} simple closed curves. More precisely, we will prove:

\begin{theorem}\label{thmmoddehn2}
	Let $\cat{A}$ be a modular category 
	and denote by
	$\mathfrak{F}_{\! \cat{A}}$ its modular functor.
	Let $d$ be
	any Dehn twist 
	about a separating essential simple closed curve
	on a closed surface $\Sigma$
	with genus $g\ge 2$ that separates the surface into pieces of genus $g'$ and $g''$.
	Then $d$ acts on the space of conformal blocks $\mathfrak{F}_{\! \cat{A}}(\Sigma)$ by a linear automorphism $\mathfrak{F}_{\! \cat{A}}(d)$ whose order
	in $\PGL(\mathfrak{F}_{\! \cat{A}}(\Sigma))$
	is equal to $\min \{  | \theta_{\mathbb{A}^{\otimes g'}}| ,  | \theta_{\mathbb{A}^{\otimes g''}}|   \}$,
	where $\mathbb{A}=\int_{X \in \cat{A}} X \otimes X^\vee\in\cat{A}$ is the canonical end of $\cat{A}$.
\end{theorem}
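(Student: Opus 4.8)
The plan is to reduce the statement, exactly as in the proof of Theorem~\ref{thmmoddehn}, to a computation inside the ansular functor $\widehat{\cat{A}}$. First I would choose a handlebody $H$ with $\partial H=\Sigma$ so that the separating curve $\gamma$ is a \emph{separating meridian} of $H$, i.e.\ bounds a properly embedded disk $\disk\subset H$ cutting $H$ into handlebodies $H'$ and $H''$ of genera $g'$ and $g''$, and so that the Dehn twist $d$ lies in the subgroup $\Map(H)\subset\Map(\Sigma)$. Invoking the factorization-homology comparison of \cite[Theorem~4.2]{brochierwoike} as in the proof of Theorem~\ref{thmmoddehn}, this yields a $\Map(H)$-equivariant isomorphism $\widehat{\cat{A}}(H)\cong\mathfrak{F}_{\! \cat{A}}(\Sigma)$, so that it suffices to compute the order of $\widehat{\cat{A}}(d)$ in $\PGL(\widehat{\cat{A}}(H))$.

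Next I would apply excision along $\disk$, in the same spirit as Lemma~\ref{lemmablockforH}, in order to present $\widehat{\cat{A}}(H)$ as a gluing of the two handlebody blocks along a boundary disk labelled by the threading object. Cutting at $\gamma$ turns $H$ into $H'$ and $H''$, each carrying one embedded disk, and the gluing identifies these disks. The description I expect is one in which the block is assembled from $\widehat{\cat{A}}(H';-)$ and $\widehat{\cat{A}}(H'';-)$ — which by the computation behind Lemma~\ref{lemmablockforH} are controlled by $\mathbb{A}^{\otimes g'}$ and $\mathbb{A}^{\otimes g''}$ respectively — and, most importantly, in which $d$ acts by the balancing $\theta$ on the object threading the gluing circle. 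Read from the genus-$g'$ side this action is (pre)composition with $\theta_{\mathbb{A}^{\otimes g'}}$, and read from the genus-$g''$ side it is (pre)composition with $\theta_{\mathbb{A}^{\otimes g''}}$; the two descriptions are intertwined by the double braiding $c_{\mathbb{A}^{\otimes g''},\mathbb{A}^{\otimes g'}}c_{\mathbb{A}^{\otimes g'},\mathbb{A}^{\otimes g''}}$ of the two gluing objects, which is exactly the feature that forces the minimum (rather than a greatest common divisor) to appear and that reappears in the Johnson-kernel criterion of Proposition~\ref{propjohnson}.

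With this in hand the upper bound $\le\min\{|\theta_{\mathbb{A}^{\otimes g'}}|,|\theta_{\mathbb{A}^{\otimes g''}}|\}$ should be immediate: raising the twist to the power $|\theta_{\mathbb{A}^{\otimes g'}}|$ makes the genus-$g'$ description act by $\theta_{\mathbb{A}^{\otimes g'}}^{|\theta_{\mathbb{A}^{\otimes g'}}|}=\id$, and symmetrically for $g''$, so that $\widehat{\cat{A}}(d)^{\min}$ is trivial. For the matching lower bound I would argue, as in the passage from Corollary~\ref{corogamma} to Theorem~\ref{thmmain}, by exhibiting a monomorphism of the relevant $B$-(bi)modules onto a piece on which $\theta_{\mathbb{A}^{\otimes g'}}$ (respectively $\theta_{\mathbb{A}^{\otimes g''}}$) is seen to act with its full order; the non-vanishing input analogous to Proposition~\ref{prophomnonzero}~\ref{prophomnonzeroi} guarantees that this piece is non-zero, so that no strictly smaller power can already be a scalar. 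Finally, to upgrade the order from $\catf{GL}$ to $\PGL$ I would reuse the $\bar{\cat{A}}\boxtimes\cat{A}$ trick from the end of the proof of Theorem~\ref{thmmain}: since $d$ acts locally by the ribbon twist, $\widehat{\bar{\cat{A}}\,}(d)=\widehat{\cat{A}}(d)^{-1}$, and a hypothetical scalar $\widehat{\cat{A}}(d)^{\ell}=\lambda\,\id$ with $\ell<\min$ would, after tensoring, contradict the $\catf{GL}$-order computed for the unimodular finite ribbon category $\bar{\cat{A}}\boxtimes\cat{A}$.

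The step I expect to be the main obstacle is the lower bound together with the precise bookkeeping that produces $\min$ rather than a greatest common divisor. Both sides give an a priori upper bound, and naively combining the two one-sided descriptions would collapse the answer; the point is that these descriptions of $\widehat{\cat{A}}(d)$ differ by the double braiding, which is generally not a scalar, so that only the genuinely smaller of the two orders survives projectively. Making this rigorous — isolating the correct sub-block on which the balancing of $\mathbb{A}^{\otimes g'}$ or $\mathbb{A}^{\otimes g''}$ acts faithfully up to scalars, and checking that the double-braiding discrepancy cannot produce an unexpected early scalar — is where the real work lies.
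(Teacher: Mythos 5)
Your overall route is the paper's: reduce to the ansular functor of a unimodular finite ribbon category via \cite[Theorem~4.2]{brochierwoike}, use excision to identify $\widehat{\cat{A}}(H_g)$ with $\cat{A}(\mathbb{A}^{\otimes g'},\mathbb{A}^{\otimes g''})^*$ with the Dehn twist acting through the balancing (Lemma~\ref{lemmacompsep}), get the upper bound for free, prove the lower bound by embedding a non-zero subspace on which the twist visibly acts with full order, and upgrade from $\catf{GL}$ to $\PGL$ with the $\bar{\cat{A}}\boxtimes\cat{A}$ trick. Two points where your sketch is off, however. First, your picture of the "main obstacle" is misdirected: the two descriptions of the action (postcomposition with $\theta_{\mathbb{A}^{\otimes g''}}$ and precomposition with $\theta_{\mathbb{A}^{\otimes g'}}$) are not merely "intertwined by a double braiding" --- they are literally the same operator on $\cat{A}(\mathbb{A}^{\otimes g'},\mathbb{A}^{\otimes g''})$ by naturality of $\theta$, and there is no min-versus-gcd tension to resolve: for $g'\le g''$ the split monomorphism $\id_{\mathbb{A}^{\otimes g'}}\otimes u^{\otimes(g''-g')}:\mathbb{A}^{\otimes g'}\to\mathbb{A}^{\otimes g''}$ forces $|\theta_{\mathbb{A}^{\otimes g'}}|$ to divide $|\theta_{\mathbb{A}^{\otimes g''}}|$, so the minimum and the gcd coincide. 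The genuine content of the lower bound is only that the operator attains the full order $|\theta_{\mathbb{A}^{\otimes g'}}|$ rather than a proper divisor of it; the paper does this with the explicit monomorphism $\End(\mathbb{A}^{\otimes g'})\otimes\cat{A}(I,\mathbb{A}^{\otimes m})\to\cat{A}(\mathbb{A}^{\otimes g'},\mathbb{A}^{\otimes g'}\otimes\mathbb{A}^{\otimes m})$ of Lemma~\ref{lemmahommap2} together with the non-vanishing statement of Proposition~\ref{prophomnonzero}, which is the concrete instance of the "piece on which $\theta$ acts with its full order" that you gesture at. Second, "reusing" the $\bar{\cat{A}}\boxtimes\cat{A}$ trick is not quite verbatim here: to derive a contradiction you need $\bigl|\theta_{\mathbb{A}^{\otimes p}}^{-1}\boxtimes\theta_{\mathbb{A}^{\otimes p}}\bigr|=|\theta_{\mathbb{A}^{\otimes p}}|$, which is not automatic (it would fail if $\theta_{\mathbb{A}^{\otimes p}}$ were a non-trivial scalar) and has to be proved separately, as the paper does via the unit monomorphism $u^{\otimes p}\boxtimes\id:\,I\boxtimes\mathbb{A}^{\otimes p}\to\mathbb{A}^{\otimes p}\boxtimes\mathbb{A}^{\otimes p}$. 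With these two repairs your argument coincides with the paper's.
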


Once again, we reduce the statement
to a statement about the  ansular functor of a unimodular
finite ribbon category. More precisely,
Theorem~\ref{thmmoddehn2} follows from the next statement:

\begin{theorem}\label{thmsep}
	Let $\cat{A}$ be a unimodular finite ribbon category and $H_g$ a handlebody with no embedded disks and genus $g\ge 2$.
	Denote by $d \in \Map(H_g)$ a 
	 Dehn twist about a separating
	 meridian of $H_g$. 
	 We denote the genera of the handlebodies obtained by cutting along this curve by $g'$ and $g''$. 
	 Then the order of the automorphism $\widehat{\cat{A}}(d)$ in $\PGL(\widehat{\cat{A}}(H_g))$ is given by
	 \begin{align}
	 	|\widehat{\cat{A}}(d)| = \min \{  | \theta_{\mathbb{A}^{\otimes g'}}| ,  | \theta_{\mathbb{A}^{\otimes g''}}|   \} \ . 
	 	\end{align}
	\end{theorem}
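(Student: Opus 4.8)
The plan is to follow the strategy of Theorem~\ref{thmmain}: first use excision to convert $\widehat{\cat{A}}(H_g)$ together with the separating Dehn twist action into a purely algebraic operator on a hom space of $\cat{A}$, then compute the order of that operator, and finally upgrade from $\catf{GL}$ to $\PGL$ by the $\bar{\cat{A}}\boxtimes\cat{A}$-trick. Concretely, I would cut $H_g$ along the disk bounded by the separating meridian into two handlebodies $H_{g'}$ and $H_{g''}$ with $g'+g''=g$ (both genera positive since the curve is essential), each carrying one embedded disk at the cut. Applying the excision result for ansular functors exactly as in the proof of Lemma~\ref{lemmablockforH} (citing \cite[Theorem~6.4]{cyclic} and \cite{brochierwoike}), together with the value $\cat{A}(\mathbb{A}^{\otimes h},X)^*$ of a genus-$h$ handlebody with one disk labelled $X$, the gluing along the separating disk gives
\begin{align}
\widehat{\cat{A}}(H_g)\cong \int^{X\in\cat{A}} \cat{A}\!\left(\mathbb{A}^{\otimes g'},X\right)^*\otimes\cat{A}\!\left(\mathbb{A}^{\otimes g''},X^\vee\right)^* \ ,
\end{align}
where $d$ acts by $\theta_X$ on the gluing variable $X$. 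Using unimodularity (self-duality of $\mathbb{A}$, as in Lemma~\ref{lemmablockforH}) and the co-Yoneda lemma to collapse the coend, this simplifies to $\widehat{\cat{A}}(H_g)\cong \cat{A}(\mathbb{A}^{\otimes g'},\mathbb{A}^{\otimes g''})^*$, and $d$ acts as the linear dual of the operator $T\colon f\mapsto \theta_{\mathbb{A}^{\otimes g''}}\circ f$ on $\cat{A}(\mathbb{A}^{\otimes g'},\mathbb{A}^{\otimes g''})$.

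The key observation is that naturality of the balancing gives two descriptions of the same operator,
\begin{align}
T(f)=\theta_{\mathbb{A}^{\otimes g''}}\circ f=f\circ\theta_{\mathbb{A}^{\otimes g'}} \ ,
\end{align}
i.e.\ $T$ is simultaneously post-composition with $\theta_{\mathbb{A}^{\otimes g''}}$ and pre-composition with $\theta_{\mathbb{A}^{\otimes g'}}$. This immediately yields the upper bound $|T|\le|\theta_{\mathbb{A}^{\otimes g''}}|$ and $|T|\le|\theta_{\mathbb{A}^{\otimes g'}}|$, hence $|T|\le\min\{|\theta_{\mathbb{A}^{\otimes g'}}|,|\theta_{\mathbb{A}^{\otimes g''}}|\}$ (and since $|T|=|d|$ this bounds the order of $d$).

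For the matching lower bound I would assume without loss of generality $g'\le g''$ and exhibit a monomorphism $\iota\in\cat{A}(\mathbb{A}^{\otimes g'},\mathbb{A}^{\otimes g''})$ detecting the order of $\theta_{\mathbb{A}^{\otimes g'}}$. By Proposition~\ref{prophomnonzero}~\ref{prophomnonzeroi} the space $\cat{A}(I,\mathbb{A}^{\otimes(g''-g')})$ is non-zero; a non-zero $\phi\colon I\to\mathbb{A}^{\otimes(g''-g')}$ is automatically monic because $I$ is simple (Lemma~\ref{lemmapropmor}), and tensoring with an identity gives a monomorphism $\iota:=\id_{\mathbb{A}^{\otimes g'}}\otimes\phi\colon\mathbb{A}^{\otimes g'}\to\mathbb{A}^{\otimes g''}$ since the monoidal product is exact. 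Naturality gives $T^{\ell}(\iota)=\iota\circ\theta_{\mathbb{A}^{\otimes g'}}^{\ell}$, which equals $\iota$ if and only if $\theta_{\mathbb{A}^{\otimes g'}}^{\ell}=\id$ because $\iota$ is monic; hence the $T$-orbit of $\iota$ has length exactly $|\theta_{\mathbb{A}^{\otimes g'}}|\in\mathbb{N}\cup\{\infty\}$ and $|T|\ge|\theta_{\mathbb{A}^{\otimes g'}}|$. Together with the upper bound this forces $|\theta_{\mathbb{A}^{\otimes g'}}|\le|\theta_{\mathbb{A}^{\otimes g''}}|$ automatically and gives $|T|=|\theta_{\mathbb{A}^{\otimes g'}}|=\min\{|\theta_{\mathbb{A}^{\otimes g'}}|,|\theta_{\mathbb{A}^{\otimes g''}}|\}$ as honest linear automorphisms. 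To pass to $\PGL(\widehat{\cat{A}}(H_g))$ I would rerun the final argument of Theorem~\ref{thmmain}: if $\widehat{\cat{A}}(d)^{\ell}=\lambda\,\id$ with $\ell<\min\{\dots\}$ and $\lambda\neq 1$, then over the unimodular finite ribbon category $\bar{\cat{A}}\boxtimes\cat{A}$ one has $\widehat{\bar{\cat{A}}\boxtimes\cat{A}}\simeq\widehat{\bar{\cat{A}}}\boxtimes\widehat{\cat{A}}$ and $(\widehat{\bar{\cat{A}}\boxtimes\cat{A}})(d)^{\ell}=\lambda^{-1}\lambda\,\id=\id$; but $\mathbb{A}_{\bar{\cat{A}}\boxtimes\cat{A}}=\mathbb{A}_{\bar{\cat{A}}}\boxtimes\mathbb{A}_{\cat{A}}$ and $|\theta^{-1}_{\mathbb{A}^{\otimes g'}}\boxtimes\theta_{\mathbb{A}^{\otimes g'}}|=|\theta_{\mathbb{A}^{\otimes g'}}|$, so the honest order there is still the same $\min$, a contradiction.

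I expect the main obstacle to be the excision step, i.e.\ establishing the first displayed isomorphism \emph{equivariantly}: one must check that cutting along a separating meridian is exactly the coend gluing of the two one-disk handlebody blocks, that the collapse of the coend via unimodularity and co-Yoneda is compatible with the $\theta_X$-action, and that this action becomes precisely $T$ (with the correct duality conventions of \cite{cyclic,brochierwoike}). Once this is in place, the order computation is short, resting only on naturality of $\theta$ and on the non-vanishing and monomorphism statements already established in Proposition~\ref{prophomnonzero} and Lemma~\ref{lemmapropmor}.
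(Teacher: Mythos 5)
Your proposal follows the paper's proof of Theorem~\ref{thmsep} essentially step for step: the reduction via excision to $\cat{A}(\mathbb{A}^{\otimes g'},\mathbb{A}^{\otimes g''})^*$ with the Dehn twist acting by post-composition with $\theta_{\mathbb{A}^{\otimes g''}}$, equivalently pre-composition with $\theta_{\mathbb{A}^{\otimes g'}}$, is exactly Lemma~\ref{lemmacompsep}; the upper bound is the same; and your lower bound via the monomorphism $\id_{\mathbb{A}^{\otimes g'}}\otimes\phi$ for a non-zero $\phi\in\cat{A}(I,\mathbb{A}^{\otimes(g''-g')})$, cancelled on the left, is the same idea as the paper's use of the equivariant monomorphism $\End(\mathbb{A}^{\otimes g'})\otimes\cat{A}(I,\mathbb{A}^{\otimes m})\hookrightarrow\cat{A}(\mathbb{A}^{\otimes g'},\mathbb{A}^{\otimes g''})$ from Lemma~\ref{lemmahommap2}; both rest on Proposition~\ref{prophomnonzero} and the exactness of the monoidal product, and yours is, if anything, slightly leaner.

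The one genuine gap is in the passage from $\catf{GL}$ to $\PGL$: you assert $\left|\theta_{\mathbb{A}^{\otimes p}}^{-1}\boxtimes\theta_{\mathbb{A}^{\otimes p}}\right|=\left|\theta_{\mathbb{A}^{\otimes p}}\right|$ without argument, but only the inequality $\le$ is automatic. The inequality $\ge$ is precisely the delicate point: if $\theta_{\mathbb{A}^{\otimes p}}^{\ell}=\xi\,\id$ for a scalar $\xi\neq 1$ with $\ell$ smaller than the order, then $(\theta_{\mathbb{A}^{\otimes p}}^{-1}\boxtimes\theta_{\mathbb{A}^{\otimes p}})^{\ell}=\xi^{-1}\xi\,\id=\id$ already, so the order on the Deligne square could a priori drop below $|\theta_{\mathbb{A}^{\otimes p}}|$ --- and that is exactly the scenario your contradiction argument needs to rule out, so the claim cannot be taken for granted. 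The paper closes this with the same device you use for the lower bound: the unit $u\colon I\to\mathbb{A}$ of the canonical end algebra is split monic, hence $u^{\otimes p}\boxtimes\id\colon I\boxtimes\mathbb{A}^{\otimes p}\to\mathbb{A}^{\otimes p}\boxtimes\mathbb{A}^{\otimes p}$ is a monomorphism which, by naturality and $\theta_I=\id_I$, intertwines $\id_I\boxtimes\theta_{\mathbb{A}^{\otimes p}}$ with $\theta_{\mathbb{A}^{\otimes p}}^{-1}\boxtimes\theta_{\mathbb{A}^{\otimes p}}$; cancelling it shows that $(\theta^{-1}\boxtimes\theta)^{\ell}=\id$ forces $\theta_{\mathbb{A}^{\otimes p}}^{\ell}=\id$. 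Adding that one observation makes your argument complete and coincident with the paper's.
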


It remains to prove
 Theorem~\ref{thmsep}. To this end, we first extract from \cite[Section~7.2]{cyclic}, after making simplifications similar to those in the proof of Lemma~\ref{lemmablockforH}:

\begin{lemma}\label{lemmacompsep} Let $\cat{A}$ be a unimodular finite ribbon category.
	Suppose $H_g = H_{g',1}\cup H_{g'',1}$ by cutting along a separating meridian.
	Then 
	\begin{align}
		\widehat{\cat{A}}(H_g) \cong \cat{A}(  \mathbb{A}^{\otimes g'} , \mathbb{A}^{\otimes g''}   )^* \ , 
	\end{align} 
	and the Dehn twist about the cutting curve acts by postcomposition with
	$\theta_{\mathbb{A}^{\otimes g''}}$ or, equivalently, precomposition with
	$\theta_{\mathbb{A}^{\otimes g'}}$. 
\end{lemma}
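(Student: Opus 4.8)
The plan is to mirror the proof of Lemma~\ref{lemmablockforH}, replacing the non-separating excision by the separating one. First I would invoke the excision result for ansular functors in the separating case, \cite[Section~7.2]{cyclic} (dualizing from left exact to right exact functors as before), to express $\widehat{\cat{A}}(H_g)$ as the contraction of the values on the two pieces $H_{g',1}$ and $H_{g'',1}$ along the legs created by the cut. The gluing is implemented by the coevaluation $\Delta$ of the Grothendieck--Verdier pairing, which for the rigid duality $D=-^\vee$ is the coend $\Delta \cong \int^{X\in\cat{A}} X \boxtimes X^\vee$. Inserting $\Delta$ into the two glued legs produces, before any simplification, an expression of the form
\begin{align}
\widehat{\cat{A}}(H_g) \cong \int^{X \in \cat{A}} \widehat{\cat{A}}(H_{g',1};X) \otimes \widehat{\cat{A}}(H_{g'',1};X^\vee) \ .
\end{align}

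Next I would identify the values on the two pieces by running the same computation as in Lemma~\ref{lemmablockforH}: rewriting the relevant (co)end through the left Nakayama functor $\nakal \cong -\otimes \alpha$, using unimodularity $\alpha \cong I$, the self-duality of $\mathbb{A}$ from \cite[Theorem~4.10]{shimizuunimodular}, and the pivotal structure. This makes the genus-$g'$ piece contribute $\cat{A}(\mathbb{A}^{\otimes g'},X)$ and the genus-$g''$ piece contribute $\cat{A}(\mathbb{A}^{\otimes g''},X^\vee)\cong \cat{A}(X,\mathbb{A}^{\otimes g''})$, where the last isomorphism again uses the pivotal structure and the self-duality of $\mathbb{A}$ (and the overall dualization is tracked exactly as in Lemma~\ref{lemmablockforH}). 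The coend over $X$ then collapses by the co-Yoneda (density) formula,
\begin{align}
\int^{X\in\cat{A}} \cat{A}(\mathbb{A}^{\otimes g'},X) \otimes \cat{A}(X,\mathbb{A}^{\otimes g''}) \cong \cat{A}(\mathbb{A}^{\otimes g'},\mathbb{A}^{\otimes g''}) \ ,
\end{align}
so that, after taking the dual, $\widehat{\cat{A}}(H_g) \cong \cat{A}(\mathbb{A}^{\otimes g'},\mathbb{A}^{\otimes g''})^*$ as claimed.

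For the Dehn twist, I would track the action through these isomorphisms just as the $\theta_X$-tracking was carried out in Lemma~\ref{lemmablockforH}. The Dehn twist about the cutting curve acts locally by the ribbon twist on the object threading through the cut, i.e.\ by $\theta_X$ on the coend variable $X$ in $\Delta \cong \int^X X \boxtimes X^\vee$. By dinaturality of the coend, inserting $\theta_X$ may be pushed onto either tensor factor, and under the co-Yoneda isomorphism $[f\otimes h] \mapsto h\circ f$ it becomes precomposition with $\theta_{\mathbb{A}^{\otimes g'}}$ on $\cat{A}(\mathbb{A}^{\otimes g'},\mathbb{A}^{\otimes g''})$ in one presentation and postcomposition with $\theta_{\mathbb{A}^{\otimes g''}}$ in the other. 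These agree by naturality of the balancing, since $\theta_{\mathbb{A}^{\otimes g''}}\circ f = f\circ \theta_{\mathbb{A}^{\otimes g'}}$ for every $f\colon \mathbb{A}^{\otimes g'} \to \mathbb{A}^{\otimes g''}$; dualizing transports this to the stated action on $\cat{A}(\mathbb{A}^{\otimes g'},\mathbb{A}^{\otimes g''})^*$.

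The main obstacle is not any individual calculation—each is routine given Lemma~\ref{lemmablockforH}—but rather keeping the Dehn twist equivariance intact through the coend collapse and matching the (left versus right exact, and dualization) conventions of \cite[Section~7.2]{cyclic} so that the twist genuinely lands as $\theta_X$ at the gluing object. Once that bookkeeping is pinned down, the self-duality of $\mathbb{A}$ and the naturality of $\theta$ do the rest.
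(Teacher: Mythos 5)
Your proposal is correct and matches the paper's intended argument: the paper itself gives no written proof of Lemma~\ref{lemmacompsep}, merely stating that it is extracted from \cite[Section~7.2]{cyclic} ``after making simplifications similar to those in the proof of Lemma~\ref{lemmablockforH}'', which is precisely what you carry out --- separating excision with the coevaluation $\Delta\cong\int^{X}X\boxtimes X^\vee$ inserted at the cut, the Nakayama/unimodularity/self-duality simplifications, a co-Yoneda collapse, and naturality of $\theta$ to convert the twist on the gluing object into pre- or postcomposition. The convention-matching you flag (left versus right exact, where the dualization sits) is indeed the only remaining bookkeeping, and it does not affect the validity of the argument.
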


In addition, we will need the following algebraic fact:

		\begin{lemma}\label{lemmahommap2}
		Let $\cat{A}$ be a monoidal category in $\Rexf$ with exact monoidal product and simple unit.
		Then the map
		\begin{align} \End_\cat{A}(X) \otimes \cat{A}(I,Y) \to \cat{A}(X,X\otimes Y) \quad \text{for}\quad X,Y \in \cat{A}\label{eqnimagemap2}
		\end{align} induced by the monoidal product is a monomorphism.
	\end{lemma}

\begin{proof}
	As observed in the proof of Lemma~\ref{lemmahommap}, the map $\cat{A}(I,Y)\otimes X \to X \otimes Y$ is a monomorphism (this was Lemma~\ref{lemmapropmor} combined with the exactness of the monoidal product). Now we apply the left exact functor $\cat{A}(X,-)$. 
	\end{proof}

\begin{proof}[\slshape Proof of Theorem~\ref{thmsep}] 	Lemma~\ref{lemmacompsep} tells us 
	that the order of $\widehat{\cat{A}}(d)$ is the order of the action of $\theta_{\mathbb{A}^{\otimes g''}}$ or, equivalently, 
	$\theta_{\mathbb{A}^{\otimes g'}}$ on
	$\cat{A}(  \mathbb{A}^{\otimes g'} , \mathbb{A}^{\otimes g''}   )$.
	This implies
	immediately 
	\begin{align}
		|\widehat{\cat{A}}(d)| \le \min \{  | \theta_{\mathbb{A}^{\otimes g'}}| ,  | \theta_{\mathbb{A}^{\otimes g''}}|   \} \ . 
	\end{align}
	For the proof of the corresponding `$\ge$' inequality, we can assume without loss of generality $g'' \ge g'$ and write $\mathbb{A}^{\otimes g''}=\mathbb{A}^{\otimes g'} \otimes \mathbb{A}^{\otimes m}$ for some $m\ge 0$. 
	By Lemma~\ref{lemmahommap2} we have a monomorphism
		\begin{align} \iota : \End_\cat{A}(\mathbb{A}^{\otimes g'}) \otimes \cat{A}(I,\mathbb{A}^{\otimes m}) \to \cat{A}(\mathbb{A}^{\otimes g'} , \mathbb{A}^{\otimes g'} \otimes \mathbb{A}^{\otimes m} ) \ . 
			\label{eqnimagemap3}\end{align}
			This monomorphism is non-zero because $\cat{A}(I,\mathbb{A}^{\otimes m}) \neq 0$ 
			by Proposition~\ref{prophomnonzero}~\ref{prophomnonzeroi} and 
			$\End_\cat{A}(\mathbb{A}^{\otimes g'}) \neq 0$ 
			(because otherwise $\mathbb{A}^{\otimes g'} =0$ which would contradict Proposition~\ref{prophomnonzero}~\ref{prophomnonzeroi}).
			Moreover, \eqref{eqnimagemap3} is $\mathbb{Z}$-equivariant if we equip the right hand side with the Dehn twist action
			(the action of $\theta_{\mathbb{A}^{\otimes g''}}=\theta_{\mathbb{A}^{\otimes g'} \otimes \mathbb{A}^{\otimes m}}$ or, equivalently, 
			$\theta_{\mathbb{A}^{\otimes g'}}$) and the left hand side with the action of $\theta_{\mathbb{A}^{\otimes g'}}$ on $\End_\cat{A}(\mathbb{A}^{\otimes g'})$.
			This follows from the naturality of $\theta$ and $\theta_I=\id_I$. 
			Now let $\ell < \min \{  | \theta_{\mathbb{A}^{\otimes g'}}| ,  | \theta_{\mathbb{A}^{\otimes g''}}|   \}$ and pick $0\neq v \in \cat{A}(I,\mathbb{A}^{\otimes m})$. 
			The $\ell$-th power of $d$ sends $\iota (\id_{\mathbb{A}^{\otimes g'}}     \otimes v)$ to 
			$\iota (\theta_{\mathbb{A}^{\otimes g'}}^\ell \otimes v)$.
			Moreover, $\theta_{\mathbb{A}^{\otimes g'}}^\ell \neq \id_{\mathbb{A}^{\otimes g'}}$ by
			assumption. 
			By injectivity of $\iota$
			we arrive at
			$ \iota (\theta_{\mathbb{A}^{\otimes g'}}^\ell \otimes v) \neq \iota (\id_{\mathbb{A}^{\otimes g'}}     \otimes v)$, thereby proving $|\widehat{\cat{A}}(d)| \ge \min \{  | \theta_{\mathbb{A}^{\otimes g'}}| ,  | \theta_{\mathbb{A}^{\otimes g''}}|   \}$. 
			
			This proves
			$|\widehat{\cat{A}}(d)| = \min \{  | \theta_{\mathbb{A}^{\otimes g'}}| ,  | \theta_{\mathbb{A}^{\otimes g''}}|   \}$ in $\catf{GL}(\widehat{\cat{A}}    (H_g))$. In order to determine the order in $\PGL(\widehat{\cat{A}}    (H_g))$,
			let us assume $\widehat{\cat{A}}(d)^\ell=\lambda \id$ for some $1 \neq \lambda \in k^\times$ and some $1<\ell < \min \{  | \theta_{\mathbb{A}^{\otimes g'}}| ,  | \theta_{\mathbb{A}^{\otimes g''}}|   \}$. We will show that this leads to a contradiction.
			With the exact same argument as in the proof of Theorem~\ref{thmmain},
			we conclude that
			$\left(\widehat{\bar{\cat{A}} \boxtimes\cat{A}}\right)(d)^\ell$ is the identity, which after applying the already established statement for
			the order in 
			the general linear group to $\bar{\cat{A}} \boxtimes \cat{A}$ tells us \begin{align}
			\label{eqnellge}	\ell \ge \min \{  | \theta_{\mathbb{A}^{\otimes g'}_{   \bar{\cat{A}} \boxtimes \cat{A}        }}| ,  | \theta_{\mathbb{A}^{\otimes g''}_{\bar{\cat{A}} \boxtimes \cat{A}}}|   \}\ , \end{align}
			where $\mathbb{A}^{\otimes p}_{\bar{\cat{A}} \boxtimes \cat{A}}$ for $p\ge 0$ is the end in $\bar{\cat{A}} \boxtimes \cat{A}$, namely $\mathbb{A}^{\otimes p} \boxtimes \mathbb{A}^{\otimes p}$ with ribbon twist $\theta_{\mathbb{A}^{\otimes p}}^{-1} \boxtimes \theta_{\mathbb{A}^{\otimes p}}$. 
			If we knew
			\begin{align}
			\label{eqnminima}	\min \left\{  \left| \theta_{\mathbb{A}^{\otimes g'}_{   \bar{\cat{A}} \boxtimes \cat{A}        }}\right| ,  \left| \theta_{\mathbb{A}^{\otimes g''}_{\bar{\cat{A}} \boxtimes \cat{A}}}\right|   \right\}
				= \min \{  | \theta_{\mathbb{A}^{\otimes g'}}| ,  | \theta_{\mathbb{A}^{\otimes g''}}|   \} \ , 
				\end{align}
			we would be done because we would obtain a contradiction to the definition of $\ell$.
			The equality~\eqref{eqnminima} will follow from the equality
			\begin{align} \left|\theta_{\mathbb{A}^{\otimes p}}^{-1} \boxtimes \theta_{\mathbb{A}^{\otimes p}}\right| = |\theta_{\mathbb{A}^{\otimes p}}|   \label{twistAAeq} \  \end{align}
			that we will prove now:
			First note that only the $\ge$-inequality is a priori
			not obvious; it would fail e.g.\
			if $\theta_{\mathbb{A}^{\otimes p}}=\xi \id$ for some $\xi \neq 1$. 
			To see the $\ge $-inequality, we use the monomorphism $u:I \to \mathbb{A}$ that selects the unit of the canonical end algebra 
			(the map induced by the  coevaluations $I\to X\otimes X^\vee$ for all $X\in \cat{A}$; it is a monomorphism because the projection $\mathbb{A} \to I \otimes I^\vee \cong I$ splits it). Since $\otimes$ is exact,
			the  map $I\ra{u^{\otimes p}} \mathbb{A}^{\otimes p}$ is also a monomorphism, and so is the map
			\begin{align}
				I \boxtimes \mathbb{A}^{\otimes p} \ra{u^{\otimes p}    \boxtimes \id_{\mathbb{A}^{\otimes p} }      }  \mathbb{A}^{\otimes p} \boxtimes \mathbb{A}^{\otimes p} \ . 
				\end{align}  This inclusion intertwines the twist by naturality and proves 
			the $\ge$-inequality in~\eqref{twistAAeq} and therefore~\eqref{eqnminima}. This concludes the proof.
	\end{proof}

\begin{corollary}\label{corbalAdehntwist}
	Let $\cat{A}$ be a unimodular
	finite ribbon category, 
	and let $\gamma$ be the separating simple closed curve on $\partial H_2$ that cuts $H_2$ into two genus one pieces.
	Then the $p$-th power of Dehn twist about $\gamma$ acts trivially on 	$\widehat{\cat{A}}(H_2)$ if and only if $\theta_\mathbb{A}^p=\id_\mathbb{A}$ for the end $\mathbb{A}=\int_{X\in\cat{A}} X\otimes X^\vee $. 
\end{corollary}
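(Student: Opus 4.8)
The plan is to specialize Lemma~\ref{lemmacompsep} to the case $g=2$ with $g'=g''=1$ and read off the action directly, rather than invoking Theorem~\ref{thmsep}. The latter only records the order in $\PGL(\widehat{\cat{A}}(H_2))$, whereas here we must detect when the action is genuinely the identity in $\catf{GL}(\widehat{\cat{A}}(H_2))$, so the argument should be routed through the explicit description. For $g'=g''=1$, Lemma~\ref{lemmacompsep} provides a canonical isomorphism $\widehat{\cat{A}}(H_2)\cong \cat{A}(\mathbb{A},\mathbb{A})^* = \End_\cat{A}(\mathbb{A})^*$ under which the Dehn twist about $\gamma$ acts as the linear dual of the postcomposition operator $\Theta\colon \End_\cat{A}(\mathbb{A})\to\End_\cat{A}(\mathbb{A})$, $f\mapsto \theta_\mathbb{A}\circ f$.

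Next I would pass to powers. Since $\widehat{\cat{A}}(d)=\Theta^*$, we have $\widehat{\cat{A}}(d)^p=(\Theta^p)^*$, and $\Theta^p$ is postcomposition with $\theta_\mathbb{A}^p$. Because dualization of linear endomorphisms of a finite-dimensional vector space is faithful (a map equals the identity if and only if its transpose does, as $\id^*=\id$ and $\phi\mapsto\phi^*$ is an anti-isomorphism of $\End$-algebras), the $p$-th power of the Dehn twist acts trivially on $\widehat{\cat{A}}(H_2)$ if and only if $\Theta^p=\id_{\End_\cat{A}(\mathbb{A})}$, i.e.\ if and only if $\theta_\mathbb{A}^p\circ f = f$ for every $f\in\End_\cat{A}(\mathbb{A})$.

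Finally I would evaluate this condition at the single endomorphism $f=\id_\mathbb{A}$, which forces $\theta_\mathbb{A}^p=\theta_\mathbb{A}^p\circ\id_\mathbb{A}=\id_\mathbb{A}$; the converse implication is immediate, since postcomposition with $\id_\mathbb{A}$ is the identity of $\End_\cat{A}(\mathbb{A})$. This yields the claimed equivalence. I expect no real obstacle in this argument: the only point requiring care is the interpretation of ``acting trivially'' in $\catf{GL}$ rather than $\PGL$, which is exactly why one works from Lemma~\ref{lemmacompsep} directly; the remaining content is the elementary observation that testing the postcomposition operator on $\id_\mathbb{A}$ collapses the seemingly stronger pointwise condition to the single equation $\theta_\mathbb{A}^p=\id_\mathbb{A}$.
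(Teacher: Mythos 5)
Your proof is correct. The paper itself gives no written proof and presents the statement as an immediate consequence of Theorem~\ref{thmsep}: with $g'=g''=1$ the order of $\widehat{\cat{A}}(d)$ equals $|\theta_{\mathbb{A}}|$ (the proof of that theorem establishes this in $\catf{GL}(\widehat{\cat{A}}(H_2))$ before passing to $\PGL$), so $d^p$ is trivial iff $|\theta_{\mathbb{A}}|$ divides $p$ iff $\theta_{\mathbb{A}}^p=\id_{\mathbb{A}}$. You instead bypass Theorem~\ref{thmsep} and argue directly from Lemma~\ref{lemmacompsep}, which is legitimate and in fact cleaner for this special case: your evaluation of the postcomposition operator at $f=\id_{\mathbb{A}}$ is exactly the $m=0$ degeneration of the paper's general argument, where the monomorphism $\iota$ of Lemma~\ref{lemmahommap2} becomes the identity on $\End(\mathbb{A})\otimes\cat{A}(I,I)\cong\End(\mathbb{A})$ and the non-vanishing input from Proposition~\ref{prophomnonzero} is not needed. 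Your caution about $\catf{GL}$ versus $\PGL$ is well placed, since the statement of Theorem~\ref{thmsep} only records the projective order; routing the argument through the explicit bimodule description resolves this without having to inspect the theorem's proof. The only cosmetic caveat is that the induced action on the dual space $\cat{A}(\mathbb{A},\mathbb{A})^*$ may be the dual of the inverse of postcomposition (to get a left action), but this does not affect triviality of the $p$-th power, so your conclusion stands.
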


\begin{remark}\label{rembeyondrigid}
Theorem~\ref{thmmain} and~\ref{thmmoddehn} are formulated for ansular and modular functors coming from unimodular finite ribbon categories and modular categories, respectively.
One could ask whether the statement still holds for general ribbon Grothendieck-Verdier categories (whose Grothendieck-Verdier duality is not necessarily the rigid duality). 
Generally, it turns out to be false because,
	for $g\ge 2$, 
	 we can build ansular and modular functors with non-trivial balancing whose space of conformal blocks in genus $g$ is zero. This is a consequence of \cite[Example~3.4]{mwcenter}.
\end{remark}

\section{Applications, special cases and examples}

\subsection{Unitary structures}
When investigating modular functors from complex
modular fusion categories, i.e.\ in the semisimple case,
a large part of the literature focuses on the unitary case, where the mapping class group acts through unitary operators, see e.g.~\cite[Section~2.4.1]{rw}. 
The order of the Dehn twists has implications for the existence of unitary structures on the spaces of conformal blocks.
This insight seems to be standard and is reviewed in~\cite[Section~4.1]{funartqft}. 
Let us give here the full argument for completeness. Afterwards, we use this to derive
 consequences from 
Theorem~\ref{thmmoddehn}.

\begin{lemma}
	Let $\cat{A}$ be a modular category.
	If for some closed surface $\Sigma$ of genus $g\ge 3$ the associated mapping class group representation 
	$\Map(\Sigma) \to \PGL(  \mathfrak{F}_{\! \cat{A}}(\Sigma)   )$
	factors through a group morphism $\Map(\Sigma)\to G$ for a compact Lie group $G$,
	then any Dehn twist acts by a finite order element under the representation.
	\end{lemma}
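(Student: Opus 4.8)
The plan is to reduce the statement to a single numerical claim about the ribbon structure and then to use compactness to constrain eigenvalues. By Theorem~\ref{thmmoddehn} a Dehn twist about a non-separating essential simple closed curve acts with order exactly $|\theta|$ in $\PGL(\mathfrak{F}_{\! \cat{A}}(\Sigma))$, while by Theorem~\ref{thmmoddehn2} a twist about a separating curve acts with order $\min\{|\theta_{\mathbb{A}^{\otimes g'}}|,|\theta_{\mathbb{A}^{\otimes g''}}|\}$. Since $\theta^{|\theta|}=\id$ forces $\theta_{\mathbb{A}^{\otimes g'}}^{|\theta|}=\id$, each of these two orders divides $|\theta|$, and hence so does their minimum. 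Therefore every Dehn twist acts with finite order as soon as $|\theta|<\infty$, and $|\theta|<\infty$ is in turn equivalent to a single non-separating Dehn twist $d$ acting with finite order (such a curve exists because $g\ge 1$). So I fix such a $d$ and aim to show that $\mathfrak{F}_{\! \cat{A}}(d)$ has finite order in $\PGL(\mathfrak{F}_{\! \cat{A}}(\Sigma))$.

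Next I would make the compactness hypothesis explicit. If the representation factors as $\Map(\Sigma)\xrightarrow{\psi}G\xrightarrow{\rho}\PGL(\mathfrak{F}_{\! \cat{A}}(\Sigma))$ with $G$ a compact Lie group and $\rho$ continuous, then $\rho(G)$ is compact, so the closure $K:=\overline{\mathfrak{F}_{\! \cat{A}}(\Map(\Sigma))}$ is a compact subgroup of the Lie group $\PGL(\mathfrak{F}_{\! \cat{A}}(\Sigma))$, hence itself a compact Lie group. Averaging a Hermitian form over $K$ lets me conjugate $K$ into the projective unitary group, so that $\mathfrak{F}_{\! \cat{A}}(d)$ is represented by a unitary operator. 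This operator is then diagonalizable, its eigenvalue ratios lie on the unit circle, and $\mathfrak{F}_{\! \cat{A}}(d)$ has finite order in $\PGL$ precisely when these ratios are roots of unity. Reading this back through Lemma~\ref{lemmablockforH}, where $d$ acts via the generalized ribbon element $\nu=\theta_G$, diagonalizability already rules out a non-trivial unipotent contribution; thus the only remaining obstruction to $|\theta|<\infty$ is the presence of a unit-modulus eigenvalue ratio that fails to be a root of unity.

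To cut down the ambient group I would invoke the genus hypothesis. For $g\ge 3$ the lantern relation makes $\Map(\Sigma)$ perfect, i.e.\ $H_1(\Map(\Sigma))=0$ (see \cite{farbmargalit}). Consequently the composite $\Map(\Sigma)\xrightarrow{\mathfrak{F}_{\! \cat{A}}}K\to K/\overline{[K,K]}$ is a homomorphism from a perfect group into a compact abelian group and is therefore trivial; as $\mathfrak{F}_{\! \cat{A}}(\Map(\Sigma))$ is dense in $K$, this forces $K=\overline{[K,K]}$. In particular $K$ has no non-trivial continuous characters and its identity component is semisimple. Thus $\mathfrak{F}_{\! \cat{A}}(d)$ is an elliptic element of a semisimple compact group, and the abelian invariants one would ordinarily use to read off an irrational rotation all vanish.

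The step I expect to be the real obstacle is the last one: promoting ``eigenvalue ratios of modulus one'' to ``roots of unity''. Compactness together with perfectness is by itself not enough, since a semisimple compact group such as $\operatorname{SU}(2)$ does contain elements of infinite order; what is missing is an arithmetic input, reflecting that this lemma is a conditional, non-semisimple analogue of \emph{Vafa's theorem}. The route I would try is to use that the eigenvalues of $\nu$ are algebraic and to argue that, in the present unitary situation, all of their Galois conjugates again have modulus one — for instance by checking that the Galois-twisted modular functors remain of the same compact type — so that Kronecker's theorem identifies the relevant ratios as roots of unity and yields $|\theta|<\infty$. Establishing this Galois-stability of unitarity is the crux; once it is available, the reduction of the first paragraph shows that every Dehn twist acts with finite order.
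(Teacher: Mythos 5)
Your argument does not close, and you acknowledge this yourself: everything hinges on the final step of upgrading ``eigenvalue ratios of modulus one'' to ``roots of unity'', and nothing in the proposal establishes it. As you correctly note, compactness plus perfectness of $\Map(\Sigma)$ cannot suffice on its own, since compact semisimple groups contain elements of infinite order; the proposed remedy via algebraicity of the eigenvalues of $\nu=\theta_G$, Galois-stability of unitarity, and Kronecker's theorem is left as a speculation, and for a general modular category it is not even clear that these eigenvalues are algebraic, let alone that all their Galois conjugates have modulus one. So there is a genuine gap precisely at the crux.

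The missing input is group-theoretic rather than arithmetic: what matters is how Dehn twists sit inside $\Map(\Sigma)$, not just that $H_1(\Map(\Sigma))=0$. The paper's proof is three lines: by \cite[Corollary~2.6]{AramayonaSouto}, for $g\ge 3$ \emph{any} homomorphism $\varphi:\Map(\Sigma)\to G$ into a compact Lie group sends every Dehn twist $d$ to an element of finite order $\ell(d)$, whence $\varrho(d)^{\ell(d)}=\psi\bigl(\varphi(d)^{\ell(d)}\bigr)$ is the identity. That result of Aramayona--Souto exploits the conjugation and commutation relations among Dehn twists to force torsion in any compact target --- exactly the structural information your eigenvalue analysis never uses. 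Your opening reduction (via Theorems~\ref{thmmoddehn} and~\ref{thmmoddehn2}, it suffices to treat a single non-separating twist) is correct as far as it goes, but it becomes unnecessary once the citation is in place, since the cited result applies to all Dehn twists uniformly; and note that the reduction quietly uses $g\ge 1$ arguments where the lemma's hypothesis $g\ge 3$ is what actually powers the Aramayona--Souto input.
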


\begin{proof}Let $\Sigma$ be a closed surface of genus $g\ge 3$. 
	Suppose the representation 
	factors through a compact Lie group $G$, i.e.\ $\varrho : \Map(\Sigma) \to \PGL(  \mathfrak{F}_{\! \cat{A}}(\Sigma)   )$ can be written as the composition
	\begin{align}
		\Map(\Sigma) \ra{\varphi} G \ra{\psi} \PGL(  \mathfrak{F}_{\! \cat{A}}(\Sigma)   ) \ . 
	\end{align}
	The map $\varphi$ sends every Dehn twist  $d$
	to an element of finite order $\ell(d)$ in $G$ by \cite[Corollary~2.6]{AramayonaSouto}. 
	But then $\varrho(d)^{\ell(d)}$ is the identity.
	\end{proof}

We can therefore draw the following conclusion from Theorem~\ref{thmmoddehn}:

\begin{corollary}\label{corlie}
	Let $\cat{A}$ be a  modular category 
	 with $|\theta|=\infty$.
	Then the associated mapping class group representation 
	$\Map(\Sigma) \to \PGL(  \mathfrak{F}_{\! \cat{A}}(\Sigma)   )$
	for a surface of genus $g\ge 3$ does not factor through a group morphism $\Map(\Sigma)\to G$ for any compact Lie group $G$.
	In particular, the representation cannot be made unitary if $\cat{A}$ lives over the complex numbers.
\end{corollary}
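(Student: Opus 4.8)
The plan is to derive Corollary~\ref{corlie} as a direct contrapositive of the preceding Lemma, combined with the order statement of Theorem~\ref{thmmoddehn}. The logical skeleton is short: I want to show that if the representation $\Map(\Sigma) \to \PGL(\mathfrak{F}_{\! \cat{A}}(\Sigma))$ factored through a compact Lie group, then every Dehn twist would act by a finite order element; but Theorem~\ref{thmmoddehn} tells us that for a non-separating essential simple closed curve the Dehn twist acts by an element of order $|\theta|$ in $\PGL(\mathfrak{F}_{\! \cat{A}}(\Sigma))$, and we have assumed $|\theta|=\infty$, so such a Dehn twist acts by an element of infinite order. This contradiction rules out the factorization. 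Since we need a non-separating simple closed curve to exist and Theorem~\ref{thmmoddehn} to apply, the genus hypothesis $g\ge 1$ is more than covered by $g\ge 3$.

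Concretely, I would first invoke the Lemma: assume for contradiction that $\varrho : \Map(\Sigma) \to \PGL(\mathfrak{F}_{\! \cat{A}}(\Sigma))$ factors as $\Map(\Sigma) \ra{\varphi} G \ra{\psi} \PGL(\mathfrak{F}_{\! \cat{A}}(\Sigma))$ through a compact Lie group $G$. The Lemma then guarantees that every Dehn twist acts by a finite order element. Next I would fix any Dehn twist $d$ about a non-separating essential simple closed curve on $\Sigma$ — such curves exist because $g\ge 3 \ge 1$ — and apply Theorem~\ref{thmmoddehn}, which says that the order of $\mathfrak{F}_{\! \cat{A}}(d)$ in $\PGL(\mathfrak{F}_{\! \cat{A}}(\Sigma))$ equals $|\theta|$. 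By hypothesis $|\theta|=\infty$, so $d$ acts by an element of infinite order, directly contradicting the finiteness just obtained. This establishes the non-factorization claim.

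For the final sentence about unitarity, the point is that the unitary case is a special instance of factoring through a compact Lie group. If $\cat{A}$ lives over $\C$ and the projective representation could be made unitary, then it would factor through the projective unitary group $\catf{PU}(\mathfrak{F}_{\! \cat{A}}(\Sigma))$ of the finite-dimensional Hilbert space $\mathfrak{F}_{\! \cat{A}}(\Sigma)$, which is a compact Lie group. Applying the non-factorization statement just proved with $G = \catf{PU}(\mathfrak{F}_{\! \cat{A}}(\Sigma))$ immediately yields the contradiction, so no unitary structure can exist.

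I do not anticipate any genuine obstacle here, since this is a formal consequence of two results already in hand; the only minor care needed is in the last step, to articulate cleanly why unitarizability forces a factorization through a compact group — namely that a finite-dimensional projective unitary representation takes values in $\catf{PU}(n)$, which is compact. Everything else is a one-line contrapositive.
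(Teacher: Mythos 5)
Your proposal is correct and follows essentially the same route as the paper, which likewise derives the corollary by combining the preceding lemma (factorization through a compact Lie group forces Dehn twists to have finite order, via Aramayona--Souto) with Theorem~\ref{thmmoddehn} and the observation that unitarizability means factoring through the compact group $\catf{PU}(n)$. The only slight imprecision is your remark that $g\ge 3$ merely ``covers'' the needs of Theorem~\ref{thmmoddehn}: the hypothesis $g\ge 3$ is in fact essential for the lemma itself (the Aramayona--Souto rigidity result requires it), though since you invoke the lemma with its stated hypothesis this does not affect the validity of the argument.
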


\subsection{Extending faithfulness beyond single Dehn twists}
The faithful translation of the ribbon twist
 to the operator assigned to Dehn twists 
about non-separating simple closed curves is still 
far away from actual faithfulness statements for the entire representation.
We can, however, extend the faithfulness from one single Dehn twist to a slightly larger subgroup.
\begin{figure}[h]
\begin{center}
\begin{overpic}[scale=0.6
	,tics=10]
	{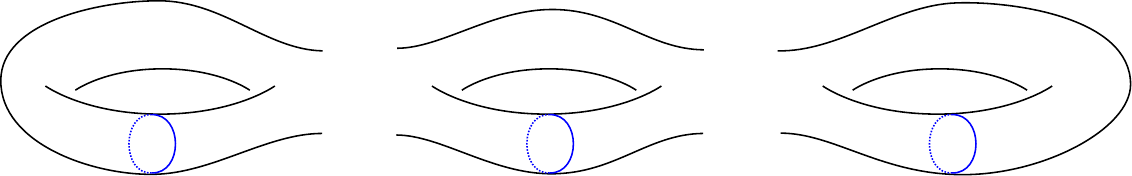}
	\put(30,7){$\dots$} 
	\put(63.5,7){$\dots$} 
\end{overpic}
\end{center}
\caption{The Dehn twists generating the subgroup $\Z^g \subset \Map(\Sigma_g)$ in Proposition~\ref{propzg}}
\label{figzg}
\end{figure}
\begin{proposition}\label{propzg}
	Let $\cat{A}$ be a modular category and $\mathbb{Z}^g \subset \Map(\Sigma_g)$ for $g\ge 1$ the free abelian subgroup generated by the Dehn twists shown in
	Figure~\ref{figzg}.
	Then the kernel of the $\mathbb{Z}^g$ action on $\mathfrak{F}_\cat{A}(\Sigma_g)$ is $N \mathbb{Z}^g$ with $N:=|\theta|\in\mathbb{N}\cup \{\infty\}$.
	In particular, the $\mathbb{Z}^g$-action is faithful if $|\theta|=\infty$.
\end{proposition}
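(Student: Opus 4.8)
The plan is to transport the whole question to the ansular functor and then peel off one coordinate of $\mathbb{Z}^g$ at a time, reducing each to the genus-one computation already settled in Theorem~\ref{thmmain}. First I would choose a handlebody $H_g$ with $\partial H_g=\Sigma_g$ for which the $g$ curves of Figure~\ref{figzg} are the standard meridians: one per handle, pairwise disjoint, each non-separating on $\Sigma_g$. Exactly as in the proof of Theorem~\ref{thmmoddehn}, restriction along $\partial\colon\Hbdy\to\Surf$ supplies a $\Map(H_g)$-equivariant isomorphism $\widehat{\cat{A}}(H_g)\xrightarrow{\cong}\mathfrak{F}_{\cat{A}}(\Sigma_g)$, so it suffices to analyse the $\mathbb{Z}^g$-action on $V:=\widehat{\cat{A}}(H_g)^*$. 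Cutting $H_g$ along all $g$ meridians at once, the excision underlying Lemma~\ref{lemmablockforH} (i.e.\ \cite[Theorem~6.4]{cyclic}) identifies $V\cong\cat{A}(\mathbb{A}^{\otimes g},I)$, each handle contributing one tensor factor of $\mathbb{A}=\int_{X}X\otimes X^\vee$, in such a way that the generator $d_i$ acts by precomposition with $\theta_{\mathbb{A}}$ on the $i$-th factor. Granting this model, the inclusion $N\mathbb{Z}^g\subseteq\ker$ is immediate: by Theorem~\ref{thmmoddehn} each $d_i$ has order $N=|\theta|$ in $\PGL(V)$, so whenever $N\mid n_i$ for all $i$ the product $d_1^{n_1}\cdots d_g^{n_g}$ is projectively trivial.

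The real content is the reverse inclusion $\ker\subseteq N\mathbb{Z}^g$, and here I would isolate the coordinates. For each $i$, let $\mu_i\colon\mathbb{A}\to\mathbb{A}^{\otimes g}$ be the morphism inserting the unit $u\colon I\to\mathbb{A}$ of the canonical end algebra in every slot except the $i$-th and $\id_{\mathbb{A}}$ in the $i$-th. Since $u$ is a split monomorphism (the projection $\mathbb{A}\to I\otimes I^\vee\cong I$ splits it) and $\otimes$ is exact, $\mu_i$ is a split monomorphism, hence precomposition $\rho_i\colon\cat{A}(\mathbb{A}^{\otimes g},I)\to\cat{A}(\mathbb{A},I)$ is surjective. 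The crucial point---the same one used in the proof of Theorem~\ref{thmsep}---is that $\theta_{\mathbb{A}}\circ u=u$ by naturality of $\theta$ together with $\theta_I=\id_I$. Consequently $\rho_i$ intertwines $d_j$ with the identity for $j\neq i$, and intertwines $d_i$ with precomposition by $\theta_{\mathbb{A}}$ on $\cat{A}(\mathbb{A},I)$.

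With these maps in hand, suppose $d_1^{n_1}\cdots d_g^{n_g}$ acts on $V$ by a scalar $\lambda$. Applying $\rho_i$ and using the equivariance just described, precomposition with $\theta_{\mathbb{A}}^{n_i}$ equals $\lambda\,\id$ on $\cat{A}(\mathbb{A},I)$, and since $\rho_i$ is surjective this is a genuine identity of operators. But $\cat{A}(\mathbb{A},I)\cong\widehat{\cat{A}}(H_1)^*$, with the single-meridian Dehn twist acting by precisely this precomposition, and by Theorem~\ref{thmmain} in genus one that automorphism has order $N$ in $\PGL(\widehat{\cat{A}}(H_1))$. Hence $\theta_{\mathbb{A}}^{n_i}$ being scalar forces $N\mid n_i$ (and, in the case $N=\infty$, forces $n_i=0$). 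Running this over all $i$ yields $\ker\subseteq N\mathbb{Z}^g$, which together with the easy inclusion gives $\ker=N\mathbb{Z}^g$; the faithfulness assertion is then the special case $N=\infty$, treated uniformly.

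The step I expect to be the main obstacle is pinning down the simultaneous, all-handles-at-once equivariant model $V\cong\cat{A}(\mathbb{A}^{\otimes g},I)$ with the factorwise $\theta_{\mathbb{A}}$-action of the $d_i$. Lemma~\ref{lemmablockforH} records this action for a single meridian only, so I would need to check that the excision of \cite{cyclic} is natural enough to produce all $g$ commuting Dehn-twist actions simultaneously and independently, each localized on its own tensor factor. Once that model is established, the remaining arguments are the formal manipulations above, all of which reuse lemmas already proved in this section.
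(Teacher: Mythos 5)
Your overall strategy---transport to the handlebody, model the blocks on $\cat{A}(\mathbb{A}^{\otimes g},I)$, and then collapse all but one coordinate onto the genus-one representation---is reasonable, but the step you yourself flagged as the main obstacle is where the argument actually breaks, and it breaks for a concrete reason rather than a bookkeeping one. The meridional Dehn twist $d_i$ does \emph{not} act on the $i$-th tensor factor by (pre)composition with the natural-transformation component $\theta_{\mathbb{A}}$. By Lemma~\ref{lemmablockforH} (and its proof), it acts by the ``partial twist'' $\mathcal{T}\colon\mathbb{A}\to\mathbb{A}$ induced on the end $\mathbb{A}=\int_X X\otimes X^\vee$ by $\theta_X\otimes\id_{X^\vee}$ on the dummy variable; this differs from $\theta_{\mathbb{A}}$, whose restriction to the $X$-component is $\theta_{X\otimes X^\vee}=c_{X^\vee,X}c_{X,X^\vee}(\theta_X\otimes\theta_{X^\vee})$. (Sanity check: for the Drinfeld double of a finite abelian group one has $\mathbb{A}\cong I^{\oplus n}$, so $\theta_{\mathbb{A}}=\id$, yet by Theorem~\ref{thmmoddehn} the meridional twist acts with order $|\theta|>1$; so the action cannot be $\theta_{\mathbb{A}}$.) This misidentification is fatal to your key equivariance claim: you need the unit $u\colon I\to\mathbb{A}$ to be fixed by the operator governing the $j$-th coordinate for $j\neq i$, and while $\theta_{\mathbb{A}}\circ u=u$ does hold by naturality (this is the identity used in Theorem~\ref{thmsep}, where the curve is \emph{separating} and the action genuinely is $\theta_{\mathbb{A}^{\otimes g'}}$), one has $\pi_X\circ\mathcal{T}\circ u=(\theta_X\otimes\id_{X^\vee})\circ\operatorname{coev}_X\neq\operatorname{coev}_X$ in general, so $\mathcal{T}\circ u\neq u$. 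Hence $\rho_i$ does not intertwine $d_j$ with the identity, and the reduction to genus one collapses.

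Two further comments. First, your projection idea is repairable: replace $u$ by the counit $\pi_I\colon\mathbb{A}\to I\otimes I^\vee\cong I$, which \emph{is} fixed by $\mathcal{T}$ since $\theta_I=\id_I$; postcomposition with $\pi_I^{\otimes(i-1)}\otimes\id_{\mathbb{A}}\otimes\pi_I^{\otimes(g-i)}$ gives a split epimorphism $\cat{A}(I,\mathbb{A}^{\otimes g})\to\cat{A}(I,\mathbb{A})$ with exactly the equivariance you wanted, and the rest of your argument then goes through. Second, the paper avoids the issue entirely by going in the opposite direction: it uses the $\mathbb{Z}^g$-equivariant \emph{inclusion} $\cat{A}(I,\mathbb{A})^{\otimes g}\subset\cat{A}(I,\mathbb{A}^{\otimes g})$ of Lemma~\ref{lemmahommap}, on which the action is literally $L^{n_1}\otimes\dots\otimes L^{n_g}$ for the genus-one operator $L$, and then shows by a commutation argument in $\End(V^{\otimes g})$ that $L^{n_1}\otimes\dots\otimes L^{n_g}=\id$ forces each $L^{n_i}$ to be central, hence scalar, hence $N\mid n_i$ since $L$ has order $N$ in $\PGL(V)$ by Theorem~\ref{thmmoddehn}. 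That extra centrality step is the price of using an inclusion rather than a surjection; your (corrected) surjection would trade it for the exactness/splitting arguments above. Finally, a minor point: the kernel in the statement is the honest linear kernel, so for the easy inclusion you should invoke that $d_i^N=\id$ in $\catf{GL}$ (which the proof of Theorem~\ref{thmmain} establishes before passing to $\PGL$), not merely projective triviality.
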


\begin{proof}
	The definition of $\mathbb{Z}^g$ makes use of a handlebody with boundary $\Sigma_g$ by means of which we can identify the dual of 
	 $\mathfrak{F}_\cat{A}(\Sigma_g)$ with $\cat{A}(I,\mathbb{A}^{\otimes g})$, 
	 see Lemma~\ref{lemmacompsep}. This vector space contains the non-trivial subspace $\cat{A}(I,\mathbb{A})^{\otimes g}$ by Lemma~\ref{lemmahommap}.
	The inclusion  $\cat{A}(I,\mathbb{A})^{\otimes g} \subset \cat{A}(I,\mathbb{A}^{\otimes g})$ is $\mathbb{Z}^g$-equivariant if $\cat{A}(I,\mathbb{A})^{\otimes g}$ is seen as the $g$-th tensor product of the $\mathbb{Z}$-representation $\cat{A}(I,\mathbb{A})$. Here $\cat{A}(I,\mathbb{A})$ is the dual space of conformal blocks for the torus, and the $\mathbb{Z}$-action is the action of the $\mathbb{Z}$-factor
	in $\Map(H_1)\cong \mathbb{Z}\times\mathbb{Z}_2$.
	 Hence, the $\mathbb{Z}$-action on $\cat{A}(I,\mathbb{A})$
	  has order $N$ by Theorem~\ref{thmmoddehn}, even in the projective linear group. Therefore, it suffices to show
	that the kernel of the $\mathbb{Z}^g$-representation on $V^{\otimes g}$ with $V:=\cat{A}(I,\mathbb{A})$ is $N\mathbb{Z}^g$.  
	To this end, denote by $L \in \Aut(V)$ the image of $1\in\mathbb{Z}$ under the representation.
	Now $(n_1,\dots,n_g)\in\mathbb{Z}^g$ acts by
	$L^{n_1}\otimes\dots\otimes L^{n_g}$. 
	For $1\le i\le g$, we write
	$L^{n_1}\otimes\dots\otimes L^{n_g}=R\otimes L^{n_i}\otimes T$ with
	$R:=L^{n_1} \otimes \dots \otimes L^{n_{i-1}}$ and $T:=L^{n_{i+1}} \otimes \dots \otimes L^{n_g}$. 
	If $L^{n_1}\otimes\dots\otimes L^{n_g}=\id_{V^\otimes g}$, then for $Q \in \End(V)$
	\begin{align}
		(	\id_{V^{\otimes (i-1)}} \otimes Q \otimes \id_{V^{\otimes (g-i)}}) \circ (L^{n_1}\otimes\dots\otimes L^{n_g}) = (L^{n_1}\otimes\dots\otimes L^{n_g})\circ   (	\id_{V^{\otimes (i-1)}} \otimes Q \otimes \id_{V^{\otimes (g-i)}}) \ ,   \end{align}
	and hence
	\begin{align} R \otimes QL^{n_i} \otimes T=R \otimes L^{n_i} Q \otimes T \ . 
	\end{align}Since $R$ and $T$ are invertible, this implies
	\begin{align} \id_{V^{\otimes (i-1)}} \otimes QL^{n_i} \otimes \id_{V^{\otimes (g-i)}} = \id_{V^{\otimes (i-1)}} \otimes L^{n_i} Q \otimes \id_{V^{\otimes (g-i)}} \ . 
	\end{align}
	Since $\End(V) \mapsto \End(V^{\otimes g}), W \mapsto \id_{V^{\otimes (i-1)}} \otimes W \otimes \id_{V^{\otimes (g-i)}}$ is injective, we arrive at
	$QL^{n_i}=L^{n_i}Q$. 
	This proves that $L^{n_i}$ is central in $\End(V)$, i.e.\ $L^{n_i}=\lambda_i  \id_V$ with some $\lambda_i \in k^\times$. Since the order of $L$ in the projective linear group of $V$ is $N$, we obtain $n_i \in N\mathbb{Z}$. 
\end{proof}

By \cite[Theorem~8.2]{henselprimer} the maximal free abelian subgroup of $\Map(H_g)$ for $g\ge 2$ is $\mathbb{Z}^{3g-3}$, and the subgroup $\mathbb{Z}^g$ featuring in Proposition~\ref{propzg} is a proper
subgroup of this maximal free abelian subgroup.
We do not know whether the full maximal free abelian subgroup $\mathbb{Z}^{3g-3}$ also acts faithfully.

\subsection{Visibility of the Johnson kernel}
The \emph{Johnson kernel}~\cite{johnson} is the 
subgroup of the mapping class group generated by
Dehn twists about 
separating simple closed curves.
The following result tells us when the modular functor of a modular category will see the Johnson kernel:
\begin{proposition}\label{propjohnson}
	The modular functor of 
	a modular category $\cat{A}$ annihilates
	the Johnson kernels of all closed surfaces
	 if and only if the twist of the canonical end $\mathbb{A}=\int_{X\in\cat{A}}X\otimes X^\vee$ is trivial and if $c_{\mathbb{A},\mathbb{A}}^2=\id_{\mathbb{A}\otimes\mathbb{A}}$.
	\end{proposition}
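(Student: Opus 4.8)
The plan is to translate the question about the Johnson kernel into a question about the twists $\theta_{\mathbb{A}^{\otimes g}}$ on tensor powers of the canonical end, exploiting the computation of separating Dehn twists already carried out in Lemma~\ref{lemmacompsep} and Theorem~\ref{thmsep}. By definition the Johnson kernel is generated by Dehn twists about separating simple closed curves, and any such curve is a separating meridian of a suitable handlebody bounding the surface; hence the modular functor annihilates the Johnson kernel precisely when every separating twist acts as the identity. By Lemma~\ref{lemmacompsep}, a separating twist decomposing the surface into pieces of genus $g'$ and $g''$ acts on $\widehat{\cat{A}}(H_g)\cong\cat{A}(\mathbb{A}^{\otimes g'},\mathbb{A}^{\otimes g''})^*$ by postcomposition with $\theta_{\mathbb{A}^{\otimes g''}}$. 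As this is the genuine action of an element of the handlebody group, I will work with honest (non-projective) triviality throughout.

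For the `if' direction I would first establish that $\theta_\mathbb{A}=\id_\mathbb{A}$ and $c_{\mathbb{A},\mathbb{A}}^2=\id_{\mathbb{A}\otimes\mathbb{A}}$ together force $\theta_{\mathbb{A}^{\otimes g}}=\id$ for every $g\ge 0$. The mechanism is the balancing axiom \eqref{eqnbalancing}, which for $X=\mathbb{A}$ and $Y=\mathbb{A}^{\otimes g}$ reads $\theta_{\mathbb{A}^{\otimes(g+1)}}=c_{\mathbb{A}^{\otimes g},\mathbb{A}}\,c_{\mathbb{A},\mathbb{A}^{\otimes g}}\,(\theta_\mathbb{A}\otimes\theta_{\mathbb{A}^{\otimes g}})$. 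A separate induction on $n$ shows that the double braiding $c_{\mathbb{A}^{\otimes n},\mathbb{A}}\,c_{\mathbb{A},\mathbb{A}^{\otimes n}}$ is the identity once $c_{\mathbb{A},\mathbb{A}}^2=\id$: writing $\mathbb{A}^{\otimes n}=\mathbb{A}^{\otimes(n-1)}\otimes\mathbb{A}$ and using the hexagon axioms, the double braiding of $\mathbb{A}$ with $\mathbb{A}^{\otimes n}$ factors through the double braiding of $\mathbb{A}$ with $\mathbb{A}^{\otimes(n-1)}$ and the double braiding of $\mathbb{A}$ with $\mathbb{A}$, both trivial by the inductive hypothesis and the assumption. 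Substituting back and inducting on $g$ yields $\theta_{\mathbb{A}^{\otimes g}}=\id$, so every separating twist acts by postcomposition with an identity morphism, i.e.\ trivially, and the Johnson kernel is annihilated.

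For the `only if' direction I would read off the two conditions from two specific separating twists. The symmetric separating curve on $\Sigma_2$ (the case $g'=g''=1$) is governed by Corollary~\ref{corbalAdehntwist}: its triviality is equivalent to $\theta_\mathbb{A}=\id_\mathbb{A}$, which gives the first condition. For the second, I apply the hypothesis to the symmetric separating curve on $\Sigma_4$ (the case $g'=g''=2$): by Lemma~\ref{lemmacompsep} it acts on $\End(\mathbb{A}^{\otimes 2})^*$ by postcomposition with $\theta_{\mathbb{A}^{\otimes 2}}$, and evaluating triviality on $\id_{\mathbb{A}^{\otimes 2}}\in\End(\mathbb{A}^{\otimes 2})$ forces $\theta_{\mathbb{A}^{\otimes 2}}=\id$. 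Feeding $\theta_\mathbb{A}=\id$ into the balancing axiom $\theta_{\mathbb{A}^{\otimes 2}}=c_{\mathbb{A},\mathbb{A}}^2(\theta_\mathbb{A}\otimes\theta_\mathbb{A})$ then gives $c_{\mathbb{A},\mathbb{A}}^2=\id_{\mathbb{A}\otimes\mathbb{A}}$.

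The step I expect to require the most care is the projectivity bookkeeping: a priori the representations are only projective, so naive triviality would yield only that $\theta_\mathbb{A}$ and $c_{\mathbb{A},\mathbb{A}}^2$ are scalars, which is weaker than the stated conditions. The resolution is that the separating curves in play are meridians, so the corresponding mapping classes lie in a handlebody group and their action is described honestly by the ansular functor through Lemma~\ref{lemmacompsep}; evaluating the genuine operators on the identity endomorphisms $\id_{\mathbb{A}^{\otimes g'}}$ then pins the twists down exactly rather than up to a scalar. A secondary technical point is keeping the tensor factors in the correct order in the hexagon factorization of the iterated double braiding, which is routine but is the most error-prone computation in the argument.
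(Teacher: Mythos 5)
Your proposal is correct and follows essentially the same route as the paper: reduce annihilation of the Johnson kernel to $\theta_{\mathbb{A}^{\otimes n}}=\id$ for all $n\ge 1$ via the separating Dehn twist computation (Theorem~\ref{thmsep}/Lemma~\ref{lemmacompsep}), extract the two conditions from the cases $n=1,2$, and recover all $n$ from the balancing axiom together with the triviality of the iterated double braidings. The only cosmetic difference is that the paper deduces $c_{\mathbb{A}^{\otimes(n-1)},\mathbb{A}}c_{\mathbb{A},\mathbb{A}^{\otimes(n-1)}}=\id$ from the braid group representation on $\mathbb{A}^{\otimes n}$ descending to the symmetric group, whereas you run the equivalent direct induction with the hexagon axioms.
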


\begin{proof}
	By Theorem~\ref{thmmoddehn2} the Johnson kernels are annihilated if and only if $\theta_{\mathbb{A}^{\otimes n}}=\id_{\mathbb{A}^{\otimes n}}$ for all $n\ge 1$.
	Thanks to~\eqref{eqnbalancing}, we obtain \begin{align}\theta_{\mathbb{A}^{\otimes n}} = c_{\mathbb{A}^{\otimes (n-1)},\mathbb{A}}c_{\mathbb{A},\mathbb{A}^{\otimes (n-1)}}\circ (\theta_\mathbb{A} \otimes \theta_{\mathbb{A}^{\otimes (n-1)}})\ . \label{eqnfromeqnbalancing}
	\end{align}
	We can  deduce that
	\begin{align}
\label{eqndedeqnfromeqnbalancing}	\theta_{\mathbb{A}^{\otimes n}}=\id_{\mathbb{A}^{\otimes n}} \ \text{for all}\ n\ge 1 \qquad \Longleftrightarrow\qquad  \theta_{\mathbb{A}}=\id_{\mathbb{A}}\ \text{and}\ c_{\mathbb{A},\mathbb{A}}c_{\mathbb{A},\mathbb{A}}=\id_{\mathbb{A}\otimes\mathbb{A}} \ . 
	\end{align}
	Indeed, for the $(\Rightarrow)$-direction, we evaluate in $n=1$ and $n=2$.
	For $(\Leftarrow)$, note that the representation of the
	 braid group on $n$ strands on $\mathbb{A}^{\otimes n}$ that exists by virtue of $\cat{A}$ being braided descends to the symmetric group thanks to $c_{\mathbb{A},\mathbb{A}}c_{\mathbb{A},\mathbb{A}}=\id_{\mathbb{A}\otimes\mathbb{A}}$, thereby annihilating the pure braid group. This tells us  $c_{\mathbb{A}^{\otimes (n-1)},\mathbb{A}}c_{\mathbb{A},\mathbb{A}^{\otimes (n-1)}}=\id_{\mathbb{A}^{\otimes n}}$.
	With~\eqref{eqnfromeqnbalancing}, we obtain $\theta_{\mathbb{A}^{\otimes n}}=\id_{\mathbb{A}^{\otimes n}}$ for all $n\ge 0$. 
	This proves~\eqref{eqndedeqnfromeqnbalancing} which, in turn, implies the assertion.
	\end{proof}

\begin{example}\label{examplesl2}
	In \cite[Section~5.2]{gai2} the order of Dehn twists are proven to be infinite in the representations associated 
	to the modular category given by finite-dimensional modules over the small quantum group $\bar U_q (\mathfrak{sl}_2)$, with $q = \exp (2\pi \text{i}/r)$ for an odd integer $r\ge 3$. For Dehn twists about non-separating curves, this is by Theorem~\ref{thmmoddehn} the `generic' behavior that needs the ribbon twist to have infinite order, but otherwise has relatively little to do with the specific example of the small quantum group.
	In the separating case, however, the result tells us in combination with Corollary~\ref{corbalAdehntwist} that the ribbon twist of $\mathbb{A}$
	(in this case, this is $\bar U_q (\mathfrak{sl}_2)_\text{ad}$, i.e.\
	$\bar U_q (\mathfrak{sl}_2)$ with its adjoint action) has a ribbon twist of infinite order.
	This also tells us that $\bar U_q (\mathfrak{sl}_2)_\text{ad}$ when seen as symmetric Frobenius algebra in $\bar U_q (\mathfrak{sl}_2)$-modules (see \cite[Lemma~3.5]{dmno} and \cite[Theorem~6.1~(4)]{shimizuunimodular})
	cannot be braided commutative with respect to the braiding of $\bar U_q (\mathfrak{sl}_2)$-modules
	(but it is braided commutative with respect to the braiding of modules over the quantum double of $\bar U_q (\mathfrak{sl}_2)$, see 
	\cite[Theorem~6.1~(1)]{shimizuunimodular}).
	This is because otherwise the ribbon twist of 
	$\bar U_q (\mathfrak{sl}_2)_\text{ad}$
	would be trivial by \cite[Proposition~2.25]{correspondences}.
	\end{example}
	
\subsection{Visibility of	 the Torelli group}
The \emph{Torelli group}
\begin{align}
	\mathcal{I}(\Sigma) := \ker \left(   \Map(\Sigma) \to \catf{Aut}(H_1(\Sigma;\mathbb{Z}))  \right)
	\end{align}
	is the subgroup of the mapping class group formed by those mapping classes acting trivially on the first homology of the surface. 
	We will need a few key facts about the Torelli group for which we
	refer to~\cite{hatchermargalit} (see also the references therein explaining how these results can be deduced from work of Birman and Powell):
	The Torelli group
	is trivial for $g=1$, generated by Dehn twists about separating simple closed curves for $g=2$ (making it identical to the Johnson kernel), and generated by bounding pair maps for $g\ge 3$ (in fact, one may restrict to those bounding pairs that cut off a genus one surface).
	The Torelli group fits into the short exact sequence
	\begin{align}
		1\to \mathcal{I}(\Sigma)\to\Map(\Sigma)\to\catf{Sp}_{2g}(\mathbb{Z})\to 1 \ ,  \end{align}
		i.e.\ $\Map(\Sigma)/\mathcal{I}(\Sigma)$ is the symplectic group $\catf{Sp}_{2g}(\mathbb{Z})$.
	
\begin{lemma}\label{lemmasepboundingpair}
	Let $\cat{A}$ be a unimodular ribbon category in $\Rexf$ such that the canonical end $\mathbb{A}$ lies in the Müger center ($c_{X,\mathbb{A}}c_{\mathbb{A},X} = \id_{X\otimes\mathbb{A}}$ for all $X\in\cat{A}$) and has trivial ribbon twist.
	For any handlebody $H$ without embedded disks, \begin{pnum}
		\item any Dehn twist in $\Map(H)$ \label{lemmasepboundingpairi}
	about a separating  meridian \item  and any \label{lemmasepboundingpairii}
	bounding pair maps of two meridians
	cutting off a genus one surface
	\end{pnum}
	acts trivially on the space of conformal blocks $\widehat{\cat{A}}(H)$.  
\end{lemma}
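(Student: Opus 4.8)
The plan is to reduce each of the two statements to the excision descriptions of the relevant spaces of conformal blocks and then check that the local operator induced by the twist is the identity under the two hypotheses. Before starting I would record the immediate consequence of the Müger center condition obtained by setting $X=\mathbb{A}$, namely $c_{\mathbb{A},\mathbb{A}}c_{\mathbb{A},\mathbb{A}}=\id_{\mathbb{A}\otimes\mathbb{A}}$. Together with $\theta_\mathbb{A}=\id_\mathbb{A}$ this is exactly the hypothesis appearing in the equivalence~\eqref{eqndedeqnfromeqnbalancing} from the proof of Proposition~\ref{propjohnson}, so its $(\Leftarrow)$-direction yields $\theta_{\mathbb{A}^{\otimes n}}=\id_{\mathbb{A}^{\otimes n}}$ for all $n\ge 1$. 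This fact handles part~\ref{lemmasepboundingpairi}; part~\ref{lemmasepboundingpairii} will use the hypotheses more directly.

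For part~\ref{lemmasepboundingpairi}, I would invoke Lemma~\ref{lemmacompsep}: cutting $H$ along the separating meridian into pieces of genus $g'$ and $g''$ identifies $\widehat{\cat{A}}(H)$ with $\cat{A}(\mathbb{A}^{\otimes g'},\mathbb{A}^{\otimes g''})^*$, and the Dehn twist about the cutting curve acts by postcomposition with $\theta_{\mathbb{A}^{\otimes g''}}$. By the observation above this is postcomposition with the identity, so the twist acts by the identity on $\widehat{\cat{A}}(H)$. Note that this is literal triviality in $\catf{GL}$, not merely in $\PGL$, which is consistent with and sharper than the order computation in Theorem~\ref{thmsep}.

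For part~\ref{lemmasepboundingpairii}, I would cut $H$ along the two meridian disks bounded by the bounding pair $(\gamma_1,\gamma_2)$, decomposing it into the genus-one handlebody-with-two-disks it cuts off and the complementary piece. The excision formula of \cite[Theorem~6.4]{cyclic}, simplified as in Lemma~\ref{lemmablockforH}, then writes $\widehat{\cat{A}}(H)$ as a coend over the two labels $M,N$ of the glued disks, and the genus-one factor contributes exactly one copy of the handle object $\mathbb{A}$, i.e.\ the morphism space $\cat{A}(M,\mathbb{A}\otimes N)^*$ (the two boundary circles being oppositely oriented across the handle, one enters the domain and one the codomain). The bounding pair map $d=T_{\gamma_1}T_{\gamma_2}^{-1}$ acts locally by precomposition with $\theta_M$ and postcomposition with $\theta_N^{-1}$, so on $f\colon M\to\mathbb{A}\otimes N$ it sends $f\mapsto(\id_\mathbb{A}\otimes\theta_N^{-1})\,f\,\theta_M$. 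Using naturality of $\theta$ in the form $f\,\theta_M=\theta_{\mathbb{A}\otimes N}\,f$ and then the balancing axiom~\eqref{eqnbalancing}, this equals $(\id_\mathbb{A}\otimes\theta_N^{-1})\,c_{N,\mathbb{A}}c_{\mathbb{A},N}\,(\theta_\mathbb{A}\otimes\theta_N)\,f$; the double braiding $c_{N,\mathbb{A}}c_{\mathbb{A},N}$ is the identity because $\mathbb{A}$ lies in the Müger center and $\theta_\mathbb{A}=\id_\mathbb{A}$, so the expression collapses to $f$. Hence $d$ acts by the identity on every component of the coend and therefore on $\widehat{\cat{A}}(H)$.

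The main obstacle is bookkeeping rather than conceptual: I must pin down the orientation convention for the bounding pair so that the two boundary circles of the genus-one piece sit on opposite sides of the handle. This is what guarantees that the double braiding produced by the balancing axiom is the one between $N$ and $\mathbb{A}$ --- annihilated by the Müger center hypothesis --- rather than a double braiding between $M$ and $N$, which the hypotheses would not control. It is also precisely why the \emph{inverse} in $T_{\gamma_1}T_{\gamma_2}^{-1}$ is essential: with two positive twists the surviving factor would be $\theta_N^{2}$ instead of $\id$, matching the fact that $T_{\gamma_1}T_{\gamma_2}$ does not lie in the Torelli group. A secondary point to verify is that the excision description of the genus-one piece and the locality of the twist action are compatible with the coend structure maps, so that triviality on each component indeed descends to triviality of the assembled operator.
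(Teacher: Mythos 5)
Your proposal is correct and follows essentially the same route as the paper: for part (i) it derives $\theta_{\mathbb{A}^{\otimes n}}=\id$ (the paper does a direct induction using the Müger-center hypothesis with $X=\mathbb{A}^{\otimes\ell}$, you route through the equivalence established in the proof of Proposition~\ref{propjohnson} --- same content) and then applies the excision description of the separating twist; for part (ii) it performs the same local computation on the genus-one piece, converting the difference of twists into a double braiding with $\mathbb{A}$ times $\theta_{\mathbb{A}}$ via naturality and the balancing axiom, which the hypotheses annihilate. The only cosmetic difference is that you place $\mathbb{A}$ in the codomain of the local hom space where the paper places it in the domain.
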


\begin{proof}
	By assumption $\theta_\mathbb{A}=\id_\mathbb{A}$ and $c_{\mathbb{A},X}c_{X,\mathbb{A}}=\id_{X\otimes\mathbb{A}}$ for all $X\in\cat{A}$.  
	This implies
	\begin{align} \theta_{\mathbb{A} \otimes \mathbb{A}^{\otimes \ell}} = c_{ \mathbb{A}^{\otimes \ell} , \mathbb{A}  }c_{  \mathbb{A},\mathbb{A}^{\otimes \ell} }
		(   \theta_\mathbb{A} \otimes \theta_{\mathbb{A}^{\otimes \ell}}  ) = \id_{\mathbb{A}} \otimes \theta_{\mathbb{A}^{\otimes \ell}} \quad \text{for all}\quad \ell\ge 0
	\end{align}
	and therefore inductively
	$\theta_{\mathbb{A}^{\otimes \ell}}=\id_{\mathbb{A}^{\otimes \ell}}$ for all $\ell\ge 0$. Now Theorem~\ref{thmsep} implies that any Dehn twist about a separating meridian acts trivially.
	This proves~\ref{lemmasepboundingpairi}.
	
	For the proof of~\ref{lemmasepboundingpairii}, note that by the construction of the ansular functor associated to $\cat{A}$~\cite{cyclic,mwansular} the torus with two boundary components (one thought of as incoming, one as outgoing) is sent to the (dual of the) vector space $\cat{A}(X\otimes \mathbb{A}, Y)$ with the action of the bounding pair maps being the `difference' of the ribbon twist on  $X$ and the ribbon twist on  $Y$. 
	In other words, the action is $f\mapsto \theta_Y \circ f \circ (\id_\mathbb{A}\otimes \theta_X^{-1}) $ for any morphism $f:X \otimes \mathbb{A}\to Y $. By the excision property for spaces of conformal blocks it suffices to prove that this action is trivial. 
	Indeed, we obtain
	\begin{align}
	\theta_Y \circ f \circ (\id_\mathbb{A}\otimes \theta_X^{-1}) = f \circ \theta_{\mathbb{A}\otimes X} \circ (\id_\mathbb{A}\otimes \theta_X^{-1}) = f \circ c_{\mathbb{A},X}c_{X,\mathbb{A}} \circ (\theta_{\mathbb{A}} \otimes \id_X)
		\end{align} By assumption on $\mathbb{A}$ the right hand side is just equal to $f$. This  finishes the proof of~\ref{lemmasepboundingpairii}.
\end{proof}

\begin{proposition}\label{proptorelli}
	Let $\cat{A}$ be a modular category.
	The representations $\mathfrak{F}_\cat{A}(\Sigma)$ for all closed surfaces $\Sigma$ 
	annihilate the Torelli group
	(this means that they descend to representations of the symplectic group) if and only if
the	canonical end $\mathbb{A}$ lies in the Müger center, i.e.\ $\mathbb{A}\cong I^{\oplus n}$ for some $n\ge 1$.
	\end{proposition}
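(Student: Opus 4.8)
The plan is to prove both implications by reducing, through the handlebody picture and excision, to explicit statements about the balancing on tensor powers of the canonical end, and to package the ``Müger center'' condition using non-degeneracy of the braiding. As a preliminary observation, for a modular category the condition that $\mathbb{A}$ lie in the Müger center, i.e. $c_{X,\mathbb{A}}c_{\mathbb{A},X}=\id_{X\otimes\mathbb{A}}$ for all $X$, is by the very definition of non-degeneracy equivalent to $\mathbb{A}\cong I^{\oplus n}$; and in that case $\theta_\mathbb{A}=\theta_{I^{\oplus n}}=\id_\mathbb{A}$ by naturality of the balancing, and likewise $\theta_{\mathbb{A}^{\otimes k}}=\id$ for all $k$. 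Throughout I will use the description of the Torelli group recalled above: it is trivial for $g=1$, equals the Johnson kernel (generated by Dehn twists about separating simple closed curves) for $g=2$, and is generated for $g\ge 3$ by bounding pair maps cutting off a genus one subsurface.

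For the implication ($\Leftarrow$), assume $\mathbb{A}$ lies in the Müger center, so that $\theta_\mathbb{A}=\id_\mathbb{A}$ and $c_{X,\mathbb{A}}c_{\mathbb{A},X}=\id$ for all $X$. Then the hypotheses of Lemma~\ref{lemmasepboundingpair} are met (a modular category is automatically a unimodular finite ribbon category), and its two parts say precisely that Dehn twists about separating meridians and bounding pair maps cutting off a genus one piece act trivially on $\widehat{\cat{A}}(H)$. I would transfer this along the $\Map(H)$-equivariant isomorphism $\widehat{\cat{A}}(H)\cong\mathfrak{F}_{\! \cat{A}}(\Sigma)$ (as in the proof of Theorem~\ref{thmmoddehn}), realizing each Torelli generator on $\Sigma$ inside a suitable handlebody as a separating meridian ($g=2$) or as a genus one bounding pair of meridians ($g\ge 3$), and conclude that every generator, hence the whole Torelli group, is annihilated. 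For the separating curves one may alternatively argue directly: by Theorem~\ref{thmmoddehn2} the order of such a twist is $\min\{|\theta_{\mathbb{A}^{\otimes g'}}|,|\theta_{\mathbb{A}^{\otimes g''}}|\}$, which equals $1$ since $\mathbb{A}\cong I^{\oplus n}$ forces all $\theta_{\mathbb{A}^{\otimes k}}=\id$.

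For the implication ($\Rightarrow$), assume the Torelli group is annihilated for all closed surfaces. Evaluating at genus $2$, the separating twist cutting $\Sigma_2$ into two genus one pieces lies in the Torelli group, so by Theorem~\ref{thmmoddehn2} with $g'=g''=1$ its order in $\PGL$ equals $|\theta_\mathbb{A}|$; triviality therefore forces $\theta_\mathbb{A}=\id_\mathbb{A}$ (compare also Corollary~\ref{corbalAdehntwist}). It remains to upgrade this to full transparency of $\mathbb{A}$, and for this I would use the genus one bounding pair maps available for $g\ge 3$. By the computation in the proof of Lemma~\ref{lemmasepboundingpair}\,\ref{lemmasepboundingpairii}, such a map acts on the block $\cat{A}(\mathbb{A}\otimes X,Y)$ of the cut-off torus-with-two-holes by $f\mapsto\theta_Y\circ f\circ(\id_\mathbb{A}\otimes\theta_X^{-1})$. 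Taking $Y=\mathbb{A}\otimes X$ and $f=\id_{\mathbb{A}\otimes X}$ shows that triviality of this operator is equivalent to $\theta_{\mathbb{A}\otimes X}=\id_\mathbb{A}\otimes\theta_X$, which by the balancing axiom~\eqref{eqnbalancing} together with $\theta_\mathbb{A}=\id_\mathbb{A}$ is in turn equivalent to $c_{X,\mathbb{A}}c_{\mathbb{A},X}=\id_{X\otimes\mathbb{A}}$, i.e. to transparency of $\mathbb{A}$ against $X$. If I can show that annihilation of these bounding pair maps forces the local operator to be trivial for a family of labels $X$ rich enough to detect non-degeneracy, then by the non-degeneracy of the braiding I obtain $\mathbb{A}\cong I^{\oplus n}$.

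The hard part will be exactly this last point: on a closed surface the labels $X,Y$ of the cut-off genus one piece are not free parameters, but are determined by and glued to the complementary subsurface of genus $g-2$. I expect to resolve it by an excision argument: cutting $\Sigma_g$ along the bounding pair expresses $\mathfrak{F}_{\! \cat{A}}(\Sigma_g)$ as a gluing of the genus one block $\cat{A}(\mathbb{A}\otimes X,Y)$ against the block of the complementary surface, with the bounding pair map acting only through the local operator above. By varying the genus and the complementary piece and using non-degeneracy of the gluing pairing $\kappa$ to detect the vectors $\id_{\mathbb{A}\otimes X}\in\cat{A}(\mathbb{A}\otimes X,\mathbb{A}\otimes X)$, the internal label $X$ should range over enough objects — in particular over the objects $X\otimes X^\vee$ underlying the end $\mathbb{A}=\int_{X}X\otimes X^\vee$ — to force transparency against all of $\cat{A}$. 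Making this probing precise, and in particular ruling out that cancellations in the glued trace could hide a non-trivial local operator, is the delicate step; everything else is a direct application of the results already established.
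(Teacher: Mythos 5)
Your ($\Leftarrow$) direction is correct and is essentially the paper's argument: invoke Lemma~\ref{lemmasepboundingpair} and the generation statements for the Torelli group in genus $1$, $2$ and $\ge 3$. The problem is in ($\Rightarrow$). You correctly extract $\theta_\mathbb{A}=\id_\mathbb{A}$ from the genus two separating twist and correctly reduce transparency of $\mathbb{A}$ against $X$ to triviality of the local bounding-pair operator on $\cat{A}(\mathbb{A}\otimes X,Y)$; but you then explicitly defer the crucial step --- producing a closed surface and a vector in its space of conformal blocks that actually detects a non-trivial double braiding --- calling it ``the delicate step'' you ``expect to resolve.'' That is precisely the content of the proposition, so as written the forward implication is not proved. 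Your worry is legitimate: on a closed surface the labels are internal, and one must rule out that the non-trivial local operator becomes invisible after gluing.

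The paper closes this gap with two concrete ideas you are missing. First, one does not need a ``rich family'' of labels or the objects $X\otimes X^\vee$ underlying the end: it suffices to test against a single projective generator $G$, because if $c_{\mathbb{A},G}c_{G,\mathbb{A}}=\id$ then by naturality the double braiding is trivial on $G^{\oplus m}$, and every $X$ receives an epimorphism from some $G^{\oplus m}$, so $\mathbb{A}$ is transparent against all of $\cat{A}$. Second, to detect the operator after gluing, the paper takes the genus two surface with two boundary circles labeled by $G$, identifies its block via the modified-trace Calabi--Yau structure and the self-duality $\mathbb{A}\cong\mathbb{A}^\vee$ with $\End(G\otimes\mathbb{A})$, on which the bounding pair map acts by $f\mapsto f\circ(c_{\mathbb{A},G}c_{G,\mathbb{A}})^{-1}$ once $\theta_\mathbb{A}=\id_\mathbb{A}$; excision then realizes $\mathfrak{F}_{\!\cat{A}}(\Sigma_3)$ as the subspace of $\End(G)$-module maps, which visibly contains the vector $\id_{G\otimes\mathbb{A}}$. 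Triviality of the Torelli action forces this vector to be fixed, i.e.\ $c_{\mathbb{A},G}c_{G,\mathbb{A}}=\id$, and no cancellation can occur because one is evaluating the operator on an explicit nonzero vector rather than on a trace. Without these two ingredients your argument does not go through.
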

	
	\begin{proof} Suppose $\mathbb{A}\cong I^{\oplus n}$ for some $n\ge 1$. Let us now prove that the Torelli groups of closed surfaces act trivially.
		Since $\theta_I=\id_I$, the hypotheses of Lemma~\ref{lemmasepboundingpair} are satisfied.
		For $g=1$, there is nothing to show. For $g=2$, we can use Lemma~\ref{lemmasepboundingpair}~\ref{lemmasepboundingpairi} because the separating Dehn twist generate the Torelli group. For $g=3$, we can use Lemma~\ref{lemmasepboundingpair}~\ref{lemmasepboundingpairii} because the Torelli group is generated by bounding pair maps of two meridians cutting off a genus one subsurface.
		(All of this uses of course the above-mentioned facts about the generation of the Torelli groups from~\cite{hatchermargalit}.)
		
		Conversely, let us assume that the representations of $\Map(\Sigma)$
		built from $\cat{A}$ annihilate the Torelli group 
		 for all closed surfaces $\Sigma$. By Corollary~\ref{corbalAdehntwist} the ribbon twist of $\mathbb{A}$ is trivial. 
		 The space of conformal blocks for a genus two surface with two boundary components both labeled by $X$ (one incoming, one outgoing) is $\cat{A}(X \otimes \mathbb{A}^{\otimes 2} , X)^*$. The bounding pair map for the genus one subsurface shown in Figure~\ref{figDT}
		 acts via the ribbon twist on $X$ and the inverse ribbon twist on $X\otimes \mathbb{A}$
		  as in the proof of Lemma~\ref{lemmasepboundingpair}.
		  
		  \begin{figure}[h]
		  	\begin{center}
		  		\begin{overpic}[scale=0.8
		  			,tics=10]
		  			{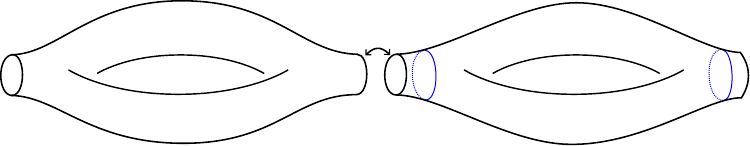}
		  			\put(-3.5,8){$X$}
		  			\put(100.5,8){$X$}
		  		\end{overpic}
		  	\end{center}
		  	\caption{The Dehn twists producing the bounding pair map for a genus two surface 
		  		with two boundary components (one incoming, one outgoing) labeled by $X$. The simple closed curves for the Dehn twists are indicated in blue. The arrow indicates a gluing. }
		  	\label{figDT}
		  \end{figure}
		  
		 Now consider the case where $X$ is a projective generator $G$ of $\cat{A}$.
		 With the projective Calabi-Yau structure coming from the modified trace~\cite[Corollary~6.3]{mtrace}, 
		 $\cat{A}(G \otimes \mathbb{A}^{\otimes 2} , G)^*\cong \cat{A}(G,G\otimes\mathbb{A}^{\otimes 2})$.
		 Thanks to unimodularity $\mathbb{A}\cong \mathbb{A}^\vee$, so that the space of conformal blocks is $\End_\cat{A}(G\otimes \mathbb{A})$ with the bounding pair map acting via the automorphism sending $f: G \otimes \mathbb{A}\to G\otimes\mathbb{A}$ to
		$(\theta_G \otimes \id_\mathbb{A} ) \circ f \circ \theta_{G \otimes \mathbb{A}}^{-1}$. 
		Thanks to $\theta_\mathbb{A}=\id_\mathbb{A}$, the action is
	$ f \mapsto f \circ (c_{\mathbb{A},G}c_{G,\mathbb{A}})^{-1} $. 
	By excision the space of conformal blocks for the closed surface of genus three is the subspace
	of $\End_\cat{A}(G\otimes \mathbb{A})$ given by those maps $f: G\otimes \mathbb{A}\to G \otimes \mathbb{A}$ that are maps of $\End_\cat{A}(G)$-modules. Clearly, $\id_{G\otimes \mathbb{A}}$ is therefore a vector in the space of conformal blocks for $\Sigma_3$. If the bounding pair maps described above, seen as elements of the Torelli group of $\Sigma_3$, acts trivially, then it must fix the vector $\id_{G\otimes \mathbb{A}}$
	which implies that $c_{\mathbb{A},G}c_{G,\mathbb{A}}$ is the identity.
	But if $\mathbb{A}$ trivially double braids with a projective generator, it trivially double braids with $G^{\oplus m}$ for any $m\ge 0$. Since any $X\in \cat{A}$ admits an epimorphism $G^{\oplus m}\to X$ for some $m$, we conclude that $\mathbb{A}$ trivially double braids with any $X\in \cat{A}$.
		\end{proof}

		\spaceplease
		\begin{example}[Visibility of the Torelli groups as a non-commutative phenomenon --- generalization of Fjelstad-Fuchs~\cite{ff}]
			Let $H$ be a ribbon factorizable Hopf algebra and $\cat{A}$ the category of finite-dimensional $H$-modules.
			By \cite[Theorem~7.4.13]{kl} $\mathbb{A}$ is $H$ with the adjoint representation. 
			The object $\mathbb{A}$ is in the Müger center if and only if $H$ is commutative.
			Indeed, if
			 $H$ is commutative, then $\mathbb{A}$ is  isomorphic to $\dim H$ many copies of the unit (which is the ground field $k$ with the counit action). 
			 Conversely, if $H_\text{ad}$ is in the Müger center, i.e.\ $H_\text{ad}\cong I^{\oplus m}$ for some $m\ge 1$, then $H_\text{ad}$ is the $H$-module with $h.x = \varepsilon(h)x$ for all $h,x\in H$. 
			 This tells us $\Hom_H(k,H_\text{ad})=H$. But
			 $\Hom_H(k,H_\text{ad})=\cat{A}(I,\mathbb{A})\cong\int_{X\in\cat{A}}\cat{A}(X,X)\cong Z(H)$.
			 Therefore, $H=Z(H)$, making $H$ commutative.
			We now conclude from Proposition~\ref{proptorelli} that the mapping class group representations built from a  ribbon factorizable Hopf algebra annihilate the Torelli groups of all closed surfaces if and only if $H$ is commutative.
			If we choose for $H$ the Drinfeld double of a finite abelian group $G$, then the associated modular functor is the Dijkgraaf-Witten modular functor for $G$. Then the statement reduces to the result by Fjelstad and Fuchs~\cite{ff} that the Dijkgraaf-Witten type modular functor sees the Torelli group if and only if $G$ is not abelian.
			\end{example}

\begin{example}
A modular fusion category is called \emph{pointed} if any simple object $X$ is invertible, i.e.\ $X \otimes X^\vee \cong I$.
	For a  modular fusion category $\cat{A}$
with simple objects $X_0,\dots,X_n$, we have $\mathbb{A} \cong \bigoplus_{i=0}^n X_i \otimes X_i^\vee $.
If $\cat{A}$ is pointed, each $X_i$ is invertible, i.e.\ $X_i^\vee \otimes X_i \cong I$.
As a result $\mathbb{A}\cong I^{\oplus (n+1)}$ lies in the  Müger center of $\cat{A}$. 
We conclude that the
Torelli groups of closed surfaces act trivially on the space of conformal blocks of a pointed modular fusion category.
\end{example}

	\small	
\newcommand{\etalchar}[1]{$^{#1}$}

\vspace*{0.3cm} \noindent \textsc{Perimeter Institute,  N2L 2Y5 Waterloo, Canada} \\[2ex] \noindent \textsc{Université Bourgogne Europe, CNRS, IMB UMR 5584, F-21000 Dijon, France}

\end{document}